\documentclass[a4paper, english, 11pt]{report}
\usepackage{import}
\usepackage{amsmath}
\usepackage{amsthm}
\usepackage{mathrsfs}
\usepackage[english]{babel}
\usepackage[utf8]{inputenc}
\usepackage{extsizes}
\usepackage[T1]{fontenc}

\usepackage{fullpage} 
\usepackage{amsmath,amssymb,amsthm,amsfonts,amstext}
\usepackage{mathrsfs} 
\usepackage{mathtools}
\usepackage{thmtools}
\usepackage{alphalph}
\usepackage{pifont} 

\usepackage{latexsym}
\usepackage{xurl}
\usepackage{cancel} 
\usepackage{tikz,pgfplots} 
\usepackage{footnote}
\usepackage{color}
\usepackage{tikz-3dplot}
\usepackage[all]{xy}
\usepackage{titlesec} 
\usepackage{csquotes} 
\usepackage{multirow} 
\usepackage{nicefrac} 
\usepackage{empheq} 
\usepackage{physics} 

\usepackage{esvect} 

\usepackage{tensor} 

\usepackage[toc]{appendix}
\usepackage[font={it}]{caption}
\usepackage{subcaption}
\usepackage{xfrac}
\usepackage[backend=biber]{biblatex}
\usepackage[procnames]{listings}
\usepackage{enumitem} 
\usepackage{float} 

\usepackage{bm}

\usepackage{setspace}
\usepackage{graphicx}
\usepackage{subcaption} 
\usepackage{CJKutf8}
\usepackage[squaren, Gray, cdot]{SIunits}
\usepackage{framed}
\usepackage{transparent}
\usepackage{eso-pic}

\usepackage{color}

\usepackage{helvet}

\usepackage{mdframed}                               
\usepackage{wrapfig}

\definecolor{keywords}{RGB}{255,0,90}
\definecolor{comments}{RGB}{0,0,113}
\definecolor{red}{RGB}{160,0,0}
\definecolor{green}{RGB}{0,150,0}

\definecolor{shadecolor}{gray}{0.80}
\declaretheoremstyle[
headfont=\normalfont\bfseries,
notefont=\mdseries, notebraces={(}{)},
bodyfont=\normalfont,
postheadspace=0.5em,
spaceabove=0pt,
mdframed={
  skipabove=8pt,
  skipbelow=6pt,
  hidealllines=true,
  backgroundcolor={shadecolor},
  innerleftmargin=4pt,
  innerrightmargin=4pt}
]{shaded}

\theoremstyle{plain}
\newtheorem{thm}{Theorem}

\newtheorem{lemma}[thm]{Lemma}

\newtheorem{rmk}[thm]{Remark}

\theoremstyle{plain}
\newtheorem{defn}[thm]{Definition}

\theoremstyle{remark}

\usepackage[margin = 1.2in]{geometry}

\usepackage[utf8]{inputenc}
\usepackage{hyperref}

\usepackage{biblatex}
\usepackage{graphicx, color}
\graphicspath{{fig/}}

\usepackage{algorithm, algpseudocode} 
\usepackage{mathrsfs}

\author{Amirreza Harandi and Roger Moser}
\title{\LARGE The Sesquiharmonic Map Flow from Riemannian Surfaces}

\makeatletter
\def\@maketitle{%
  \newpage
  \null
  \vskip 2em%
  \begin{center}%
    {\Huge\bfseries\@title \par}%
    \vskip 1.5em%
    {\large
      \lineskip .5em%
      \begin{tabular}[t]{c}%
        \@author
      \end{tabular}\par}%
    \vskip 1em%
    {\large \@date}%
  \end{center}%
  \par
  \vskip 1.5em}
\makeatother

\numberwithin{equation}{section}

\begin{document}
\maketitle

\clearpage
\begin{center}
    \bfseries Abstract
\end{center}
Let $M$ be a two-dimensional compact manifold without boundary and let $N$ be a compact manifold without boundary. We study the $L^2$-gradient flow of an energy functional that interpolates between the harmonic map energy and the intrinsic biharmonic map energy. The critical points of this functional are called sesqui-harmonic maps. We investigate regularity properties of this flow, generalizing Struwe's classical regularity result for harmonic maps.

\section{Introduction}

For two smooth compact Riemannian manifolds $\left(M^m, g\right)$ and $\left(N^n, h\right)$ without boundary and a smooth map $u \in C^{\infty}(M, N)$, the Dirichlet energy $E_0(u)$ is defined by
\begin{equation}
 E_0(u):=\frac{1}{2} \int_M|D u|^2 d v_g =\int_M e(u)dv_g  
 \label{harmonic energy}
\end{equation}
where $e(u)$ denotes the Dirichlet energy density and $d v_g$ is the volume element of $(M, g)$.

Critical points of the Dirichlet energy are called harmonic maps. They satisfy the second-order elliptic system
\begin{equation}
\Delta u=0
\label{E-L harmonic}
\end{equation}
where $\Delta u$ is the tension field of $u$, also denoted by $\tau(u)$ in the literature. 

If the target manifold $N$ is isometrically embedded into a Euclidean space $\mathbb{R}^l$, then \eqref{E-L harmonic} is equivalent to
$$
\Delta u=\Delta^E u+A(u)(D u, D u)=0,
$$
where $\Delta^E$ denotes the Laplace--Beltrami operator of $(M, g)$ acting componentwise in $\mathbb{R}^l$ and $A$ is the second fundamental form of the embedding $N \hookrightarrow \mathbb{R}^l$.

The harmonic map heat flow is defined by
\begin{equation}
 \begin{aligned}
\partial_t u-\Delta^E u & =A(u)(D u, D u), \\
u(\cdot, 0) & =u_0.
\end{aligned}  
\label{hmf}
\end{equation}
The existence of solutions to the harmonic map heat flow was first studied by Eells and Sampson~\cite{eells1964harmonic}. They also proved that if the target manifold $N$ has nonpositive sectional curvature, then the harmonic map heat flow admits a global smooth solution.
In the case where the domain manifold $M$ is two-dimensional and the target manifold $N$ is arbitrary, Struwe \cite{struwe1985evolution} proved that for all $u_0 \in W^{1,2}(M, N)$ there exists a global weak solution of \eqref{hmf} which is smooth away from finitely many points.

A natural variant of harmonic maps is given by biharmonic maps. Intrinsic biharmonic maps are defined as critical points of the intrinsic bienergy
\begin{equation}
E_2(u):=\frac{1}{2} \int_M|\Delta u|^2 d v_g .
\label{biharmonic energy}    
\end{equation}
By \eqref{E-L harmonic}, harmonic maps minimize the intrinsic bienergy and are therefore intrinsic biharmonic maps. The Euler--Lagrange equation for $E_2$ was computed by Jiang \cite{jiang19862}:
\begin{equation}
 \Delta^2 u-R^N\left(D u\left(e_i\right), \Delta u\right) D u\left(e_i\right)=0,
\label{E-L biharmonic}   
\end{equation}
where $R^N$ denotes the curvature tensor of $N$ and $\left\{e_i\right\}$ is a local orthonormal frame on $M$. 

Here, if $\nabla^M$ denotes the Levi-Civita connection on $M$ and $\nabla$ denotes the induced connection on the pullback bundle $u^{-1}TN$, then the operator $\Delta^2 u = \Delta \tau$ denotes the Laplacian associated with the connection $\nabla$, defined by
\[
\Delta \tau = \nabla_{e_i}\nabla_{e_i}\tau - \nabla_{\nabla^M_{e_i} e_i}\tau.
\]

Jiang \cite{jiang19862} further showed that if the target manifold $N$ has non positive sectional curvature, then every intrinsic biharmonic map is harmonic.

Analogously, the intrinsic biharmonic heat flow is defined by
\begin{equation}
\begin{aligned}
\partial_t u & =-\Delta^2 u+R^N\left(D u\left(e_i\right), \Delta u\right) D u\left(e_i\right), \\
u(\cdot, 0) & =u_0.
\end{aligned}
\end{equation}
Ideas and arguments similar to those of Eells and Sampson~\cite{eells1964harmonic} and Struwe~\cite{struwe1985evolution} have also been applied to the biharmonic map heat flow~\cite{lamm2004heat, moser2005blowup, lamm2004biharmonic}, as well as to other geometric evolution equations, such as the polyharmonic map heat flow~\cite{gastel2006extrinsic}, the Yang--Mills flow~\cite{struwe1994asymptotic}, and the Willmore flow~\cite{kuwert2002gradient}.

For example in \cite{lamm2004biharmonic}, Lamm proved that if $\dim M \leq 4$ and $N$ has nonpositive sectional curvature, then the biharmonic map heat flow exists for all time and subconverges to a smooth harmonic map as $t \to \infty$.

Maps interpolating between harmonic and biharmonic maps are called \textit{sesqui-harmonic maps}. They are defined as critical points of the functional
\begin{equation}
    E_{ses}(u)=\delta_1\int_M |Du|^2dv_g+\delta_2\int_M|\Delta u|^2dv_g
\end{equation}
with $\delta_1,\delta_2\in [0,\infty)$, whose Euler-Lagrange equation is
\begin{equation}\label{elliptic sesqui harmoic map equ}
    \delta_1\Delta u+\delta_2(-\Delta^2u+R^N(Du(e_i),\Delta u)Du(e_i))=0.
\end{equation}
Similarly, the sesqui-harmonic map flow is defined by
\begin{equation}
    \begin{aligned}
    \partial _t u&=\delta_1\Delta u+\delta_2(-\Delta^2u+R^N(Du(e_i),\Delta u)Du(e_i)),\\
    u(\cdot,0)&=u_0.
    \end{aligned}\label{sesqui-harmonic map flow}
\end{equation}
Interpolating sesquiharmonic maps were studied by Branding~\cite{branding2020interpolating, branding2020some}. In contrast, the corresponding heat flow has received very little attention in the literature.

In this work we investigate the regularity of the flow \eqref{sesqui-harmonic map flow} 
for two dimensional domain manifolds $M$, extending Struwe’s theory for the harmonic map flow.
We restrict to nonnegative constants $\delta_1,\delta_2$ for several reasons. First, this assumption ensures that the flow \eqref{sesqui-harmonic map flow} is parabolic. Second, it guarantees the energy would be non increasing  and allows for control of the terms arising in the estimates used in the proof of our main results.

  We introduce a suitable notion of weak solution in section ~\ref{section 3.5} (definition ~\ref{definition of weak solution}) and prove global existence. 
Moreover, we show that the set of singularities is finite and that the solution 
is smooth away from these points. More precisely, we prove
\begin{thm}\label{Global solution}
    For any initial data $u_0 \in W^{2,2}(M;N)$, there exists a time $T=T(u_0)>0$ and a solution
\[
u \in C^\infty(M\times(0,T);N)\cap W^{1,2}(M\times [0,T);N)
\]
of \eqref{sesqui-harmonic map flow} with initial condition $u(\cdot,0)=u_0$. The maximal existence time $T(u_0)$ is characterized by the condition
\[
\limsup_{t\to T}\sup_{x\in M} E_0(u(t);B_R^M(x))>\epsilon_0
\qquad \text{for all } R>0,
\]
for some $\epsilon_0>0$. Moreover, the solution $u$ is smooth on $M\times(0,T]$ except at finitely many points $(x_k,T)$, $1\leq k\leq K$, characterized by
\[
\limsup_{t\to T} E_0(u(t);B_R^M(x_k))>\epsilon_0.
\]

Here, $B_R^M(x)$ denotes the geodesic ball in $M$ of radius $R$ centered at $x$, and we always assume that $R<i_M$, where $i_M$ denotes the injectivity radius of $M$.
\end{thm}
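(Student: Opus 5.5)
We prove the theorem following Struwe's scheme for the harmonic map flow, adapted to the fourth-order operator as in Lamm's treatment of the biharmonic map heat flow on four-manifolds. The estimates are organised around the total energy $E(u)=\delta_1\int_M|Du|^2+\delta_2\int_M|\Delta u|^2$, with $\delta_2>0$ so that the flow is parabolic of fourth order. The argument has five steps.

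\emph{Step 1 (short-time existence and energy identity).} For $u_0\in W^{2,2}(M;N)$ we first approximate $u_0$ by smooth maps into $N$. This is possible since in two dimensions $W^{2,2}\hookrightarrow C^{0}$. For each approximation, standard fourth-order parabolic theory gives a local smooth solution: linearise around the initial data in the embedded setting $N\hookrightarrow\mathbb{R}^l$ and apply a fixed point argument in parabolic H\"older spaces. Testing \eqref{sesqui-harmonic map flow} with $\partial_t u$ gives the energy identity
\[
\int_0^t\!\!\int_M|\partial_t u|^2\,dv_g\,ds+\tfrac12 E(u(t))=\tfrac12 E(u_0).
\]
This yields $\partial_t u\in L^2$ and $\sup_t\|u(t)\|_{W^{2,2}}\le C(u_0)$. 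Here $\delta_1,\delta_2\ge0$ is used, together with the elliptic estimate $\|D^2u\|_{L^2}\lesssim\|\Delta u\|_{L^2}+\|Du\|_{L^4}^2+\|Du\|_{L^2}$, whose quadratic term must be absorbed.

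\emph{Step 2 (local energy inequality).} Test the equation with $\varphi^4\partial_t u$, where $\varphi$ is a cutoff on $B_{2R}^M(x)$. This gives
\[
E(u(t);B_R)\le E(u(s);B_{2R})+C\frac{t-s}{R^2}E(u_0)+\dots
\]
The error terms come from the derivatives of $\varphi$ and are controlled by the global bound. The same inequality should hold for $E_0$ up to lower-order terms. Its purpose is to propagate smallness of the local energy forward in time for a duration of order $R^4$.

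\emph{Step 3 ($\epsilon$-regularity; the main obstacle).} We need the following statement. If $\sup_{x}E_0(u(t);B_R(x))<\epsilon_0$ for $t\in[t_0,t_1]$, then
\[
\int_{t_0}^{t_1}\!\!\int_M|\nabla^4u|^2+|\nabla^2u|^3\ldots\le C\bigl(1+(t_1-t_0)R^{-4}\bigr)E(u_0),
\]
and all higher derivatives are bounded for $t>t_0$. The plan is to derive a differential inequality for $\int_M|\nabla^2u|^2$ (or for $\int|\Delta u|^2$ combined with $|\nabla\Delta u|^2$) and estimate the nonlinearities $R^N(Du,\Delta u)Du$ and $A(u)(Du,Du)$. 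For this we use localised Gagliardo--Nirenberg/Ladyzhenskaya inequalities on small balls, for example
\[
\int_{B_R}|Du|^4\le C\,E_0(u;B_R)\Bigl(\int_{B_R}|D^2u|^2+R^{-2}\int_{B_R}|Du|^2\Bigr).
\]
Smallness of $\epsilon_0$ then lets us absorb the bad terms into the coercive fourth-order term. The difficulty is that the curvature term carries three derivatives distributed as $|Du|^2|D^2u|$ together with higher-order commutator terms. Each of these must be interpolated so that exactly one factor carries the small local Dirichlet energy. If the pure $E_0$ smallness is not sufficient, I would add smallness of $\int_{B_R}|D^2u|$-type quantities, which follows from Step 2 applied to $E$. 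Higher regularity then follows by induction on derivatives, using interpolation and parabolic Schauder estimates.

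\emph{Step 4 (maximal time and singular set).} Steps 2 and 3 show the following. As long as the local energy stays below $\epsilon_0$ on balls of some radius $R$, the solution extends smoothly, and the estimates are uniform in the approximation. Passing to the limit therefore gives the solution for $u_0\in W^{2,2}$, with $u\in W^{1,2}(M\times[0,T))$ by Step 1. This characterises $T(u_0)$. At $t=T$, call $x$ singular if $\limsup_{t\to T}E_0(u(t);B_R(x))>\epsilon_0$ for all $R$. Choose any $K$ such points and disjoint balls around them. Each ball carries at least $\epsilon_0$ of energy, up to the error from Step 2, which tends to zero as $t\to T$. Hence $K\epsilon_0\le C E(u_0)$, so the singular set is finite. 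Away from these points the local energy is small on some ball uniformly up to $T$, and Step 3 gives smoothness up to $t=T$ there.
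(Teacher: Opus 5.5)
\textbf{There is a genuine gap, located at the crux of the argument.} Your architecture matches the paper's: smooth approximation of $u_0$, the energy identity, a localized $E_0$-inequality to propagate smallness, $\epsilon$-regularity, Arzel\`a--Ascoli, and the counting argument $K\epsilon_0\le CE(u_0)$. However, the decisive step is misstated in Step 1 and then left open in Step 3.

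\textbf{Step 1.} The bound $\sup_t\|u(t)\|_{W^{2,2}}\le C(u_0)$ does not follow from the energy identity. No estimate $\|D^2u\|_{L^2}\le C(\|\Delta u\|_{L^2},\|Du\|_{L^2})$ can hold: on $M=N=S^2$ the conformal dilations have zero tension and fixed Dirichlet energy, but unbounded $W^{2,2}$-norm. Moreover, if your bound held along the flow, no singularity could ever form.

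\textbf{The missing step.} The quadratic term $\|Du\|_{L^4}^2$ can only be absorbed under the hypothesis $\sup_{x,t}E_0(u(t);B_{2R}^M(x))<\epsilon_0$. This is the paper's Lemma~\ref{Key lemma}, and it is exactly where $E_0$-smallness enters. Take a partition of unity with $\sum_i\phi_i^4=1$, subordinate to $2R$-balls with finite overlap, and apply Ladyzhenskaya's inequality to $\phi_i|Du|$. This gives
\[
\int_M|Du|^4\le c\,\epsilon_0\Bigl(\int_M|\nabla Du|^2+R^{-2}E_0(u_0)\Bigr),
\]
which can be absorbed into the elliptic estimate. Since $\|\Delta u\|_{L^2}$ is bounded by the energy identity (this uses $\delta_2>0$), one obtains $\int_M|\nabla Du|^2+\int_M|Du|^4\le c(1+R^{-2})$ uniformly in $t$. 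You list both ingredients, the elliptic estimate in Step 1 and the localized inequality in Step 3, but never combine them. That is why Step 3 ends with the hedge that pure $E_0$-smallness might not suffice. Once this bound is available, the space-time bound on $\nabla\Delta u$ and the $\eta^4$-weighted higher-order estimates follow along Lamm's lines, and $E_0$-smallness alone is enough.

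\textbf{Your fallback would not close the gap.} If you localize the full energy $E$ by testing with $\varphi^4\partial_t u$, the fourth-order term leaves cutoff errors of the form $R^{-1}\int\varphi^3|\nabla\Delta u|\,|\partial_t u|$. These involve third derivatives and are not controlled by the energy identity alone; controlling them needs the very regularity that Step 3 is supposed to produce. Only the $E_0$-version closes, because its error terms contain just $\partial_t u$, $Du$ and $\Delta u$, all in $L^2$ by the energy identity (the paper's Lemma~\ref{monotonicity formula for sequi-harmonic maps}). It propagates smallness over times of order $R^2$, not $R^4$. Finally, imposing an extra smallness condition on local second-order quantities would change the characterization of $T(u_0)$ and of the singular points, which the theorem states purely in terms of $E_0$.
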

\begin{thm}\label{main theorem 2}
For any initial data $u_0\in W^{2,2}(M;N)$ there exists a weak solution $u$ of \eqref{sesqui-harmonic map flow} on $M\times[0,\infty)$ which is regular on $M\times[0,\infty)$ with exception of at most finitely many points.
\end{thm}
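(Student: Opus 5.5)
The plan is to build the global weak solution by continuing the local solution of Theorem~\ref{Global solution} past each singular time, in the spirit of Struwe's construction for the harmonic map flow. The key quantity is the energy $E(u)=\delta_1 E_0(u)+\delta_2 E_2(u)$ (up to the normalisation in $E_{ses}$). Testing \eqref{sesqui-harmonic map flow} with $\partial_t u$ gives the energy identity
\[
E(u(t))+\int_0^t\int_M|\partial_t u|^2\,dv_g\,ds = E(u_0)
\]
on every interval where $u$ is smooth. Two consequences follow: $E$ is nonincreasing, and $\partial_t u\in L^2$ globally with norm bounded by $E(u_0)$. Since $\delta_1,\delta_2\ge 0$, I will assume $\delta_2>0$, so that the flow is fourth order and $W^{2,2}$ is the natural space. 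Then $\|\Delta u(t)\|_{L^2}$ is bounded uniformly in $t$. Together with $|Du|^2$ bounds from the $\delta_1$ term, or from interpolation with the $L^\infty$ bound coming from $N\subset\mathbb{R}^l$ compact, this controls $\|u(t)\|_{W^{2,2}}$. In the extrinsic formulation this uses $\Delta^E u=\Delta u - A(u)(Du,Du)$ and a Gagliardo--Nirenberg estimate $\|Du\|_{L^4}^2\lesssim \|u\|_{L^\infty}\|u\|_{W^{2,2}}$ to absorb the second fundamental form term.

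Step 1 is to apply Theorem~\ref{Global solution} to get a smooth solution on $(0,T_1)$. If $T_1=\infty$ we are done. Otherwise, Step 2 is to define $u(T_1)$ as the weak limit $\lim_{t\to T_1}u(t)$. Because $\partial_t u\in L^2(M\times[0,T_1))$, the map $u$ is continuous into $L^2$, and the uniform $W^{2,2}$ bound gives weak convergence in $W^{2,2}$. By Rellich the limit takes values in $N$, and it is smooth away from the points $x_1,\dots,x_K$ by the second part of Theorem~\ref{Global solution}. Step 3 is to restart the flow from $u(T_1)\in W^{2,2}(M;N)$ using Theorem~\ref{Global solution} again, and to iterate. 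Concatenating the pieces gives a map satisfying the weak formulation of Definition~\ref{definition of weak solution} on each interval. Gluing across the singular times should be harmless: the weak formulation only involves integrals in space-time, $u\in L^\infty W^{2,2}$ with $\partial_t u\in L^2$, and the traces at $T_i$ match by construction.

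Step 4, which I expect to be the main obstacle, is to show that only finitely many singular points occur in total. The idea is energy quantisation: at each singular point the energy concentrates, i.e.
\[
\limsup_{t\to T_i}E_0(u(t);B_R(x_k))>\epsilon_0 .
\]
I want to show that $E$ drops by at least a fixed amount $\epsilon_1>0$ across every singular point, where $\epsilon_1$ depends on $\epsilon_0$, $\delta_1,\delta_2$ and $M,N$. Then the number of singularities is at most $E(u_0)/\epsilon_1$.

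The difficulty is that the weak $W^{2,2}$ limit need not lose $E_0$-energy, since $W^{2,2}\hookrightarrow W^{1,p}$ compactly in two dimensions. So the loss must be seen in the $E_2$ part, or at the scale of the concentration itself. I would first try a localized energy inequality with a cutoff $\varphi$ supported in $B_{2R}(x_k)$:
\[
\frac{d}{dt}\int_M \varphi\, e_{ses}(u)\,dv_g \le C R^{-2}E(u_0)+\cdots .
\]
This shows that local energy concentration cannot be undone quickly. I would then compare with the strongly convergent part away from $x_k$, to get $E(u(T_i))\le \lim_{t\to T_i}E(u(t))-\epsilon_1$ for each singular point. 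If that comparison fails directly, I would fall back on a rescaling/blow-up argument. Rescaling at the concentration point, the fourth-order scaling makes the $\delta_1$ term vanish, and one obtains a nontrivial biharmonic map from $\mathbb{R}^2$ with finite $E_2$-energy. That biharmonic map carries a definite amount of energy, which again gives the lower bound $\epsilon_1$ and hence finiteness.
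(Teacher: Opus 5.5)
Your overall architecture matches the paper's: pass to the limit at the singular time, restart with Theorem~\ref{Global solution}, concatenate, and count singularities by energy quantization. However, the step that produces the restart datum rests on a false estimate. A bound on $E_{ses}(u(t))$ does not give a uniform bound on $\|u(t)\|_{W^{2,2}}$. In $\Delta^E u=\Delta u-A(u)(Du,Du)$ the quadratic term can be absorbed only when the Dirichlet energy is locally small, which is exactly the $\epsilon_0$-hypothesis of Lemma~\ref{Key lemma}. Your Gagliardo--Nirenberg route $\|Du\|_{L^4}^2\lesssim\|u\|_{L^\infty}\|u\|_{W^{2,2}}$ carries the scale-invariant factor $\|A\|_{L^\infty}\|u\|_{L^\infty}$, which is not small. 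Concretely, the conformal dilations $u_\lambda$ of the identity $S^2\to S^2$ are harmonic, so $\Delta u_\lambda=0$ and $E_0(u_\lambda)=4\pi$, while $\|u_\lambda\|_{W^{2,2}}\to\infty$ as $\lambda\to\infty$.

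The claimed bound would also make your argument vacuous. By $W^{2,2}\hookrightarrow W^{1,4}$ and H\"older it gives $E_0(u(t);B_R^M(x))\le CR\sup_t\|u(t)\|_{W^{2,2}}^2$. Then the concentration criterion of Theorem~\ref{Global solution} could never be met and no singular time would occur. This is the actual source of your puzzle in Step~4 that no $E_0$-energy seems to be lost. The paper instead uses only the uniform $W^{1,2}$ bound to get $u(t)\rightharpoonup u(T)$ in $W^{1,2}$ and notes that the tension field of $u(T)$ is in $L^2$. It then invokes the theorem of Moser and Zhu (a map in $W^{1,2}(M;N)$ on a surface with $\Delta u\in L^2$ lies in $W^{2,2}$) to restart. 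Your gluing step must likewise drop the assumption $u\in L^\infty W^{2,2}$.

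With the false bound removed, Step~4 becomes the Struwe argument the paper cites. Near the singular time, $u(t)\to u(T)$ smoothly away from $x_1,\dots,x_K$ but only weakly in $W^{1,2}$ near them. The local energy inequality of Lemma~\ref{monotonicity formula for sequi-harmonic maps} keeps the concentrated energy from escaping before $T$, so at least $\epsilon_0$ of Dirichlet energy disappears at each $x_k$. Together with lower semicontinuity of the $E_2$-part on $M\setminus\bigcup_k B_r^M(x_k)$, this gives a fixed drop of $E_{ses}$ per singular point.

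Your blow-up fallback does not give what you claim. With $x=x_m+R_mx'$ one has $\int|\Delta v_m|^2=R_m^2\int|\Delta u|^2\to0$. So the limit is a harmonic map with zero bi-energy (as in Theorem~\ref{scaling analysis theorem}), and the quantum it carries is Dirichlet energy, not $E_2$-energy. Since that quantum enters $E_{ses}$ only with weight $\delta_1$, the counting argument needs $\delta_1>0$, not merely $\delta_2>0$.
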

For the proof, refer to section~\ref{section 3.5}.

Finally, we prove in section~\ref{section blow up analysis} that at each singular point, 
the flow subconverges to a nontrivial harmonic bubble. More precisely we prove
\begin{thm}\label{scaling analysis theorem}
    Let $u$ be a solution to \eqref{sesqui-harmonic map flow}, constructed in Section~\ref{section 3.5}, and suppose that $(x_0,T)$ is a singular point satisfying
\[
\limsup_{t \to T} E_0(u(t); B_R^M(x_0)) > \epsilon_0, 
\quad \text{for all } R \in (0,R_0].
\]
Then there exist sequences $x_m \to x_0$, $t_m \to T$, and $R_m \to 0$ with $R_m \in (0,R_0]$, and a smooth harmonic map $v : \mathbb{R}^2 \to N$ such that, as $m \to \infty$,
\[
v_m(\cdot,0) \to v
\]
locally in $W^{2,2}(\mathbb{R}^2; N)$.

Moreover, the limit map $v$ has finite positive energy and extends to a nontrivial \textbf{harmonic map} $v : S^2 \to N$.
\end{thm}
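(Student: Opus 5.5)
Let me write $\delta_1,\delta_2>0$ for the coefficients in \eqref{sesqui-harmonic map flow} and $E(t)=E_{ses}(u(t))$. The plan follows Struwe's blow-up argument, adapted to the fourth-order flow, which is not scale invariant. The rescaling is
\[
v_m(x,t)=u\bigl(\exp_{x_m}(R_m x),\,t_m+R_m^4 t\bigr).
\]
This is the parabolic scaling of the leading operator $\delta_2\Delta^2$. Under it $v_m$ solves a flow of the form
\[
\partial_t v_m=\delta_1R_m^2\Delta v_m+\delta_2\bigl(-\Delta^2v_m+R^N(Dv_m,\Delta v_m)Dv_m\bigr)
\]
on metrics $g_m=R_m^{-2}\exp_{x_m}^*g$, and $g_m\to$ the Euclidean metric locally smoothly. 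The scaled energies behave as follows: $E_0$ is invariant, $E_2(v_m)=R_m^2E_2(u)$, and $\int|\partial_tv_m|^2dx\,dt=R_m^2\int|\partial_tu|^2$.

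\textbf{Step 1: choice of scales.} Using the singularity hypothesis, pick $t_m\to T$, $x_m\to x_0$ and $R_m\to0$ so that
\[
\sup_{x\in B_{R_0}(x_0)}\ \sup_{t\in[t_m-R_m^4,\,t_m]}E_0\bigl(u(t);B_{R_m}(x)\bigr)\le\epsilon_1<\epsilon_0
\quad\text{and}\quad E_0\bigl(u(t_m);B_{R_m}(x_m)\bigr)\ge\epsilon_1 .
\]
Here $\epsilon_1$ is the small-energy threshold of the local regularity estimates proved earlier (the $\epsilon$-regularity/local $W^{2,2}$ estimates used in Section~\ref{section 3.5}). Also pick $t_m$ so that $\int_{t_m-R_m^4}^{t_m}\!\int_M|\partial_tu|^2\to0$; this is possible by the energy identity $\frac{d}{dt}E=-\int|\partial_tu|^2$. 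The energy-concentration-at-time-$T$ property and the continuity of $t\mapsto E_0(u(t);B_R(x))$ are what make this choice work. This parallels Struwe's choice of $R_m$ via the local energy inequality.

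\textbf{Step 2: uniform estimates.} On $B_{1/R_m}\times[-1,0]$ the maps $v_m$ have $E_0$-energy at most $\epsilon_1$ on every unit ball. Their bienergy satisfies $\int|\Delta v_m|^2\le R_m^2\,C E(u_0)\to0$, and $\int_{-1}^0\!\int|\partial_tv_m|^2\le R_m^2 E(u_0)\to0$. The local $\epsilon$-regularity estimates of the paper, localized with cutoffs and interpolation (Gagliardo--Nirenberg in dimension two, where small $E_0$ controls the nonlinear terms $R^N(Dv,\Delta v)Dv$), then give uniform local $W^{2,2}$ bounds. Bootstrapping the parabolic equation gives higher bounds, so $v_m(\cdot,0)$ is bounded in $W^{k,2}_{loc}$ and converges locally in $W^{2,2}$ to some $v$.

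\textbf{Step 3: identifying the limit.} I expect this to be the main obstacle. The bienergy bound alone gives only $\Delta v=0$ in the limit, since $\Delta v_m\to0$ in $L^2$. That already yields a harmonic map, with no fourth-order equation to pass to the limit. The delicate part is nontriviality. Strong local $W^{2,2}$ (hence $W^{1,2}$) convergence transfers $E_0(v_m(0);B_1(0))\ge\epsilon_1$ to $v$, and this needs the compactness from Step 2, which must hold uniformly despite the $R_m^2$ lower-order term. Finiteness follows from $E_0(v)\le\liminf E_0(u(t_m))\le E(u_0)/\delta_1$. One caveat: the bienergy is only scaled down if $E_2(u(t))$ stays bounded, which is true by energy monotonicity. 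If instead one wants to use $\partial_t v_m\to0$ to derive the equation, the $\epsilon$-regularity must control the time derivative at time $0$ as well.

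\textbf{Step 4: extension to $S^2$.} Regularity of $v$ on $\mathbb R^2$ is standard (Hélein). Since $v$ has finite energy, Sacks--Uhlenbeck's removable singularity theorem extends it via stereographic projection to a smooth nontrivial harmonic map $S^2\to N$.
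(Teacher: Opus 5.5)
Your proposal is correct, but it takes a genuinely different route from the paper's proof.

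\textbf{The paper's route.} The paper rescales with the second-order parabolic scaling $v_m(x',t')=u(x_m+R_mx',t_m+R_m^2t')$ on a window of length $\tau_m=\epsilon_0R_m^2/(8c)$. The fourth-order part of the rescaled equation then carries the factor $R_m^{-2}$ and is packaged as $\delta_2 g_m$ with $g_m=R_m^2f$. The paper then:
\begin{itemize}
\item uses the integrated estimate \eqref{2.34}, rescaled as in \eqref{scaled version L2 of 4th derivative}, to select slices $s_m\in[-\epsilon_0/(8c),0]$ with $\partial_tv_m(\cdot,s_m)\to0$ and $v_m(\cdot,s_m)$ bounded in $W^{4,2}_{loc}$;
\item passes to the limit in $\partial_{t'}v_m=\delta_1\Delta v_m-\delta_2g_m$;
\item transports the concentration $\int_{B_1(0)}|Dv_m(0)|^2=\epsilon_0/4$ back to $B_2(0)$ at time $s_m$ via the monotonicity inequality \eqref{monotonicity formula for the scaled function}.
\end{itemize}

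\textbf{Your route.} You use the biharmonic scaling $t_m+R_m^4t$. The harmonic term then carries $\delta_1R_m^2\to0$, and $[-1,0]$ is a unit parabolic cylinder. On it the localized $\epsilon$-regularity (Theorem~\ref{uniform holder bounds for all derivatives} with Remark~\ref{estimates at first singular time}) applies uniformly in $m$ and gives bounds at the final time $t=0$ itself.

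\textbf{What your route buys.}
\begin{itemize}
\item Harmonicity of the limit follows directly from $\int|\Delta v_m|^2=R_m^2\int|\Delta u|^2\le cR_m^2$ on every slice, with no equation to pass to the limit. This same scaling identity is what makes the paper's claim $g_m\to0$ true, so its detour through the rescaled equation is not really needed.
\item Nontriviality is immediate from strong $W^{1,2}_{loc}$ convergence at $t=0$, without the monotonicity transfer.
\item You obtain convergence of $v_m(\cdot,0)$, as the statement asserts. The paper's argument actually produces convergence of $v_m(\cdot,s_m)$.
\item Your Step 4 (Sacks--Uhlenbeck removable singularities) supplies the extension to $S^2$, which the paper leaves implicit after citing H\'elein.
\end{itemize}

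\textbf{The price.} You need pointwise-in-time higher-order bounds at the top of the cylinder, i.e. interior parabolic regularity. The paper gets by with the space-time bound \eqref{2.34} plus slice selection. However, harmonicity and positive energy of $v$ already follow from weak $W^{2,2}_{loc}$ convergence. That is provided at the single time $t_m$ by the static Hessian estimate (small energy on unit balls plus the $L^2$ bound on $\Delta v_m$). So the bootstrap is only needed for the strong $W^{2,2}_{loc}$ convergence claimed in the theorem.
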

For the proof, we refer to Section~\ref{section blow up analysis}.

The main differences between the present work and those of Lamm~\cite{lamm2004biharmonic} and Struwe~\cite{struwe1985evolution} stem from the structure of the flow itself. The equation \eqref{sesqui-harmonic map flow} combines both harmonic and biharmonic effects, and may be viewed as interpolating between the harmonic map heat flow and the biharmonic map heat flow. Moreover, the literature concerning this flow is rather limited.

A further difference arises in the construction of global solutions. In contrast to Lamm~\cite{lamm2004biharmonic}, where global smoothness follows from the assumption that the target manifold $N$ has nonpositive sectional curvature, no curvature assumption on $N$ is imposed in the present work. Consequently, we introduce a notion of global weak solution adapted to \eqref{sesqui-harmonic map flow}.

Since \eqref{sesqui-harmonic map flow} is highly nonlinear, with some nonlinearities hidden in the operator $\Delta$, defining weak solutions is not straightforward. Nevertheless, by carefully expanding the terms involved, we obtain a suitable notion of weak solution. This definition is compatible with the arguments of Struwe~\cite{struwe1985evolution}, while differing structurally from Struwe's notion of weak solutions.

In addition, since \eqref{sesqui-harmonic map flow} contains both harmonic and biharmonic terms, the scaling argument differs substantially from those used by Struwe~\cite{struwe1985evolution} and Lamm~\cite{lamm2004biharmonic}.

\section{Tools}\label{Sesuqi-Harmonic map flow}

 Here, we introduce some notation and definitions that we use in the following.

 For local estimates we introduce the local energies
$$E_2(u(t);B_R^M(x_0)):=\frac{1}{2}\int_{B_R^M(x_0)}|\Delta u(.,t)|^2dv_g,$$
$$E_0(u(t);B_R^M(x_0)):=\frac{1}{2}\int_{B_R^M(x_0)}|D u(.,t)|^2dv_g,$$
where $x_0\in M$ and $R>0$ as above and $B_R^M(x_0)\subset M$. Furthermore, we define a smooth cut-off function $\eta$ with the following properties
\begin{equation}\label{cut-off function}
\begin{aligned}
     &\eta\in C^\infty(M),\; \eta\geq0,\; \eta\equiv1\; \text{on}\; B_R^M(x_0),\\
     \; &\eta\equiv 0\; \text{on}\; M\setminus B_{2R}^M(x_0),||D\eta||_{L^\infty}\leq \frac{c}{R},||D^2\eta||_{L^\infty}\leq \frac{c}{R^2}
    \end{aligned}
\end{equation}
for $R<i_M$.

We use the letter $c$ as a generic constant that possibly depends on $M,N, u_0$, and other data.

We now state a lemma that will be used frequently in the sequel. The following result can be found in Ladyzhenskaya~\cite{ladyzhenskaia1968linear}, Lin and Wang~\cite{lin2008analysis}, and Struwe~\cite{struwe1985evolution}.

\begin{lemma}
For any $v \in W^{1,2}\left(\mathbb{R}^2\right)$, we have $v \in L^4\left(\mathbb{R}^2\right)$ and
\begin{equation}\label{G-N ineq}
\|v\|_{L^4\left(\mathbb{R}^2\right)}^4 \leq c\|v\|_{L^2\left(\mathbb{R}^2\right)}^2\|D v\|_{L^2\left(\mathbb{R}^2\right)}^2  
\end{equation}
for some constant $c$.
\end{lemma}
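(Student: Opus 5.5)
The inequality is the two-dimensional Ladyzhenskaya inequality. I would prove it first for $v \in C_c^\infty(\mathbb{R}^2)$ and then extend to $W^{1,2}(\mathbb{R}^2)$ by density, which also gives $v\in L^4$. The core idea is to control $v^2$ along each coordinate line by a one-dimensional integral, and then combine the two directions by Fubini.

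\emph{One-dimensional step.} For $v\in C_c^\infty(\mathbb{R}^2)$ and $x=(x_1,x_2)$, the fundamental theorem of calculus in $x_1$ gives
\begin{equation*}
v^2(x_1,x_2)=\int_{-\infty}^{x_1}\partial_1\big(v^2\big)(s,x_2)\,ds\le 2\int_{\mathbb{R}}|v\,\partial_1 v|(s,x_2)\,ds=:f_1(x_2).
\end{equation*}
The right-hand side depends only on $x_2$. Symmetrically, $v^2(x_1,x_2)\le f_2(x_1):=2\int_{\mathbb{R}}|v\,\partial_2 v|(x_1,s)\,ds$, which depends only on $x_1$.

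\emph{Combining the two directions.} Multiply the two bounds, $v^4(x)\le f_1(x_2)f_2(x_1)$, and integrate over $\mathbb{R}^2$. The right-hand side factorizes:
\begin{equation*}
\int_{\mathbb{R}^2}v^4\,dx\le \Big(\int_{\mathbb{R}}f_1\Big)\Big(\int_{\mathbb{R}}f_2\Big)=4\,\|v\,\partial_1v\|_{L^1}\,\|v\,\partial_2v\|_{L^1}.
\end{equation*}
By Cauchy--Schwarz, each factor satisfies $\|v\,\partial_iv\|_{L^1}\le \|v\|_{L^2}\|Dv\|_{L^2}$. This yields \eqref{G-N ineq} with $c=4$. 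The step needing the most care is the choice to bound $v^2$ rather than $|v|$, since that choice produces exactly the product structure $\|v\|_{L^2}^2\|Dv\|_{L^2}^2$ after Fubini.

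\emph{Density.} For general $v\in W^{1,2}(\mathbb{R}^2)$, take $v_k\in C_c^\infty(\mathbb{R}^2)$ with $v_k\to v$ in $W^{1,2}$, and pass to a subsequence converging almost everywhere. Applying the inequality to $v_k-v_j$ shows that $(v_k)$ is Cauchy in $L^4$. Its $L^4$-limit must then coincide with $v$ almost everywhere, so $v\in L^4(\mathbb{R}^2)$, and the inequality passes to the limit because both sides are continuous under $W^{1,2}$ convergence. This step is routine: there is no real obstacle here.
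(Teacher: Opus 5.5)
Your proof is correct and complete. The paper gives no proof of its own and simply cites Ladyzhenskaya, Lin--Wang and Struwe; your argument is the classical Ladyzhenskaya proof found there. It bounds $v^2$ by line integrals of $|v\,\partial_i v|$, multiplies the two directional bounds, applies Fubini and Cauchy--Schwarz, and finishes with density. It is sometimes packaged instead as the $L^1$-Sobolev inequality $\|w\|_{L^2(\mathbb{R}^2)}\le c\|Dw\|_{L^1(\mathbb{R}^2)}$ applied to $w=v^2$, which rests on the same trick.

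The same argument also works verbatim for $\mathbb{R}^l$-valued $v$ via $\partial_i|v|^2=2\langle v,\partial_i v\rangle$. That is the form in which the paper actually uses the lemma, e.g.\ for $\eta\,\Delta u$.
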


\section{Global and Local Estimates for Sesqui-harmonic map flow}
In this section we derive global and local evolution equations for the sesqui-harmonic energy, following the approach of \cite{struwe1985evolution}. 
\begin{lemma}\label{energy decreasing for sesqui-harmonic map}
    Let $u\in C^\infty(M\times [0,T),N)$ be a solution of \eqref{sesqui-harmonic map flow}. Then we have for all $t\in[0,T)$
    \begin{equation}
        E_{ses}(u(t))+\int_0^t\int_M |\partial_tu|^2dv_gdt=E_{ses}(u_0).
    \end{equation}
\end{lemma}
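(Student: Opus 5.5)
The plan is to show that $\frac{d}{dt}E_{ses}(u(t)) = -\int_M |\partial_t u|^2\,dv_g$ for every $t\in(0,T)$ and then integrate in time. The constants need care. With $E_{ses}(u)=\delta_1\int_M|Du|^2+\delta_2\int_M|\Delta u|^2$ as written, the $L^2$-gradient is $-2$ times the right-hand side of \eqref{sesqui-harmonic map flow}. I will therefore read $E_{ses}$ with the factor $\frac12$ in front of each integral, i.e.\ $E_{ses}=2\delta_1E_0+2\delta_2E_2$ in the normalization of \eqref{harmonic energy} and \eqref{biharmonic energy}. With this reading the right-hand side of the flow is exactly $-\nabla_{L^2}E_{ses}$, and the identity holds with constant $1$.

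First I handle the harmonic part. Since $\partial_t u\in u^{-1}TN$ and $u$ is smooth, differentiating under the integral and integrating by parts on the closed manifold $M$ gives
\[
\frac{d}{dt}\frac12\int_M|Du|^2\,dv_g=\int_M\langle \nabla_{e_i}\partial_t u,Du(e_i)\rangle\,dv_g=-\int_M\langle\partial_t u,\Delta u\rangle\,dv_g .
\]
Here I use $\nabla_{\partial_t}Du(e_i)=\nabla_{e_i}\partial_t u$, which holds because the pullback connection is torsion-free.

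Next comes the biharmonic part, which is the main computation. I compute $\frac{d}{dt}\frac12\int_M|\Delta u|^2=\int_M\langle\nabla_{\partial_t}\tau,\tau\rangle$ with $\tau=\Delta u$. Commuting $\nabla_{\partial_t}$ past $\nabla_{e_i}$ twice produces the curvature term:
\[
\nabla_{\partial_t}\tau=\Delta(\partial_t u)+R^N(\partial_t u,Du(e_i))Du(e_i).
\]
The remaining derivative terms cancel, again because the pullback connection is torsion-free. Integrating by parts twice, I move $\Delta$ from $\partial_t u$ onto $\tau$, which gives $\int_M\langle\partial_t u,\Delta^2u\rangle$. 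For the curvature term I use the symmetry $\langle R^N(X,Y)Y,Z\rangle=\langle R^N(Z,Y)Y,X\rangle$, which rewrites it as $\langle\partial_t u,R^N(\tau,Du(e_i))Du(e_i)\rangle$, that is, $-\langle\partial_t u,R^N(Du(e_i),\Delta u)Du(e_i)\rangle$. I expect the main obstacle to be tracking signs and curvature conventions so that this matches Jiang's term in \eqref{E-L biharmonic}. The result is
\[
\frac{d}{dt}E_2(u(t))=\int_M\langle\partial_t u,\Delta^2u-R^N(Du(e_i),\Delta u)Du(e_i)\rangle\,dv_g .
\]

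Finally I combine the two parts with weights $\delta_1,\delta_2$ and insert the flow equation:
\[
\frac{d}{dt}E_{ses}(u(t))=-\int_M\langle\partial_t u,\delta_1\Delta u+\delta_2(-\Delta^2u+R^N(Du(e_i),\Delta u)Du(e_i))\rangle\,dv_g=-\int_M|\partial_t u|^2\,dv_g .
\]
Integrating from $0$ to $t$ gives the claimed identity. The endpoint $t=0$ is covered by the assumption $u\in C^\infty(M\times[0,T),N)$, which makes $t\mapsto E_{ses}(u(t))$ continuous up to $t=0$.
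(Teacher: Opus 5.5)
Your proof is correct and follows the paper's route: you compute $\frac{d}{dt}E_0=-\int_M\langle\partial_tu,\Delta u\rangle$ and $\frac{d}{dt}E_2=\int_M\langle\partial_tu,\Delta^2u-R^N(Du(e_i),\Delta u)Du(e_i)\rangle$, substitute the flow equation, and integrate in time. Your commutator formula and curvature-symmetry argument supply details the paper leaves implicit. You are also right that the identity holds only for the normalization $\delta_1E_0+\delta_2E_2$, which the paper's proof uses tacitly. However, your parenthetical \emph{i.e.} $E_{ses}=2\delta_1E_0+2\delta_2E_2$ contradicts your own reading: putting $\frac12$ in front of each integral gives $\delta_1E_0+\delta_2E_2$, and that is the functional your final computation actually differentiates.
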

\begin{proof}
    For $E_2(u)$ we compute
    $$\delta_2\frac{d}{dt}E_2(u(t))=\int_M\langle\partial_tu(.,t),\delta_2(\Delta^2u-R^N(Du(e_i),\Delta u )Du(e_i))\rangle dv_g.$$
    We also compute
    $$\frac{d}{dt}E_0(u)=-\int_M \langle \partial _tu , \Delta u\rangle dv_g$$
    Replacing the second component in the inner product with flow, we obtain
    $$
    \begin{aligned}
         \delta_2\frac{d}{dt}E_2(u(t))&=\int_M\langle\partial_tu(.,t),-\partial_t u+\delta_1 \Delta u\rangle dv_g.\\
         &=-\int_M |\partial_tu|^2dv_g-\delta_1\frac{d}{dt}E_0(u).
     \end{aligned}
    $$
    Integrating over $[0,t]$ gives the desired result.
\end{proof}

\begin{lemma}\label{Key lemma}
    Let $u\in C^\infty (M\times[0,T),N)$ be a solution of \eqref{sesqui-harmonic map flow}. Then there are constants $c,R_0>0$ and $\epsilon_0>0$, such that for any $R\in(0,R_0]$ if $\sup\limits_{(x,t)\in M\times[0,T)}E_0(u(t);B_{2R}^M(x))<\epsilon_0$, then we have 
   \begin{equation}\label{L2-spacial bound for grad od du}
    \int_M|\nabla Du|^2\leq c(1+\frac{1}{R^2}).
    \end{equation}
    \begin{equation}\label{L4-spacial bound for du}
    \int_M|Du|^4\leq c(1+\frac{1}{R^2}).
    \end{equation}
    
    \begin{equation}\label{L2 bound for grad od du}
    \int_0^T\int_M|\nabla Du|^2\leq cT(1+\frac{1}{R^2}).
    \end{equation}
    \begin{equation}\label{L4 bound for du}
    \int_0^T\int_M|Du|^4\leq cT(1+\frac{1}{R^2}).
    \end{equation}

\end{lemma}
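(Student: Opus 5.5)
We follow Struwe's argument for the harmonic map flow. The only information that flows from the equation is Lemma~\ref{energy decreasing for sesqui-harmonic map}: since $\delta_1,\delta_2\ge 0$, it gives
\[
\sup_{t}E_0(u(t))\le c \quad\text{and}\quad \sup_t E_2(u(t))\le c,
\]
with $c$ depending on $u_0$. (When $\delta_1=0$ or $\delta_2=0$ we would need to get the missing bound separately. For $E_0$ this follows from $\|Du\|_{L^2}^2\le \|u\|_{L^2}\|\Delta^E u\|_{L^2}$ and $|\Delta^E u|\le|\Delta u|+c|Du|^2$; we treat the case $\delta_1,\delta_2>0$ as the main one.) All the estimates will then be purely spatial, at a fixed time $t$. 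The time-integrated statements \eqref{L2 bound for grad od du} and \eqref{L4 bound for du} follow from the spatial ones \eqref{L2-spacial bound for grad od du} and \eqref{L4-spacial bound for du} by integrating over $[0,T]$. So the work reduces to proving the two spatial bounds, uniformly in $t$.

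\textbf{Step 1 (localized $L^4$ bound).} Embed $N\hookrightarrow\mathbb{R}^l$. Cover $M$ by balls $B_R^M(x_j)$ such that the doubled balls $B_{2R}^M(x_j)$ have bounded overlap, with the number of balls $\lesssim R^{-2}$. Take cut-offs $\eta_j$ as in \eqref{cut-off function}. Working in normal coordinates, where the metric is comparable to the Euclidean one for $R\le R_0$, apply the Gagliardo--Nirenberg inequality \eqref{G-N ineq} to $v=\eta_j Du$:
\[
\int_{B_R}|Du|^4\le c\int_{B_{2R}}|Du|^2\int_{B_{2R}}\bigl(|\nabla Du|^2+R^{-2}|Du|^2\bigr).
\]
Summing over $j$ and using the smallness hypothesis $E_0(u(t);B_{2R}^M(x))<\epsilon_0$, we obtain
\[
\int_M|Du|^4\le c\,\epsilon_0\int_M|\nabla Du|^2+\frac{c\,\epsilon_0}{R^2}\int_M|Du|^2 .
\]

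\textbf{Step 2 (Bochner/elliptic $L^2$ bound).} We estimate $\int_M|\nabla Du|^2$ via Bochner's formula, or equivalently by $L^2$ elliptic estimates for $\Delta^E$ on the closed surface $M$. This gives
\[
\int_M|\nabla Du|^2\le c\int_M|\Delta^E u|^2+c\int_M|Du|^2 .
\]
Here $|\Delta^E u|\le|\Delta u|+|A(u)(Du,Du)|\le|\Delta u|+c|Du|^2$, so
\[
\int_M|\nabla Du|^2\le c\int_M|\Delta u|^2+c\int_M|Du|^4+c\int_M|Du|^2 .
\]
The first and last terms are bounded by the energy identity. Inserting Step 1 into the middle term gives
\[
\int_M|\nabla Du|^2\le c+c\,\epsilon_0\int_M|\nabla Du|^2+\frac{c\,\epsilon_0}{R^2}.
\]
Choosing $\epsilon_0$ small (depending only on $M,N$) absorbs the $\nabla Du$ term into the left-hand side, which yields \eqref{L2-spacial bound for grad od du}. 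Feeding this back into Step 1 gives \eqref{L4-spacial bound for du}.

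\textbf{Main obstacle.} The delicate point is making the absorption honest: the constants in the covering and Gagliardo--Nirenberg steps must be independent of $R$ and $t$. This requires the bounded-overlap covering and the comparability of the metric in normal coordinates for $R<R_0<i_M$. Note also that $\nabla Du$ is the intrinsic Hessian, whereas Step 2 really controls the Euclidean $D^2u$. The two differ by $A(u)(Du,Du)$, which is again $O(|Du|^2)$ and is controlled by the same $L^4$ term. Any curvature term of $M$ appearing in Bochner's formula contributes only $c\int_M|Du|^2$, which is bounded.
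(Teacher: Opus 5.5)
Your argument is essentially the paper's proof. Both combine Struwe's $L^2$ elliptic estimate for $\nabla Du$ in terms of $\|\Delta u\|_{L^2}$, $\|Du\|_{L^4}^2$ and $\|Du\|_{L^2}$ with a localized Gagliardo--Nirenberg bound summed over a finite-overlap cover by $B_{2R}$-balls (the paper uses a partition of unity with $\sum_i\phi_i^4=1$ where you use cut-offs $\eta_j$), then absorb for small $\epsilon_0$, use the energy identity to bound $\|\Delta u\|_{L^2}$ and $\|Du\|_{L^2}$, and integrate in time.

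One caveat concerns your parenthetical remark that for $\delta_1=0$ the $E_0$ bound follows from $\|Du\|_{L^2}^2\le\|u\|_{L^2}\|\Delta^E u\|_{L^2}$: this does not close by itself, because the right-hand side still contains $\|Du\|_{L^4}^2$. Indeed $E_0$ is not controlled by $E_2$ alone, e.g.\ degree-$k$ harmonic maps $S^2\to S^2$ have $E_2=0$ and $E_0=4\pi k$. The paper's proof likewise tacitly assumes $\delta_1,\delta_2>0$, so this does not affect the main argument.
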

\begin{proof}
We choose $R_0$ as the global lower bound for $i_M$, the injectivity radius of $M$. In particular, $R_0$ is chosen such that $B_{2R_0}^M(x)\subset M$.
    According to Struwe~\cite{struwe1985evolution}, we have
   \begin{equation}\label{hessian L2 estimate}
    \begin{aligned}
     ||\nabla D u||_{L^2(M)}&\leq c||\Delta ^{E}u||_{L^2(M)}+c||Du||^2_{L^4(M)}+c||Du||_{L^2(M)}\\
     &\leq  c||\Delta u||_{L^2(M)}+||A(u)(Du,Du)||_{L^2(M)}+ c||Du||_{L^2(M)}+c||Du||^2_{L^4(M)}\\
     &\leq c||\Delta u||_{L^2(M)}+c||Du||^2_{L^4(M)}+ c||Du||_{L^2(M)}.
    \end{aligned} 
     \end{equation}
     Then let $\left\{\phi_i\right\} \subset C_0^{\infty}(M)$ be a partition of unity associated with a finite cover of $M$ by $B_{2 R}^M\left(x_i\right)$ with finite overlap, $0 \leq \phi_i \leq 1,\left|D \phi_i\right| \leq \frac{c}{R}$, for some constant $c$ and $\sum_i \phi_i^4=1$.

Arguing as in Lin and Wang \cite[Lemma 6.2.5]{lin2008analysis}, we have 
\begin{equation}
 \begin{aligned}
\int_M|D u|^4 & =\sum_i \int_M|D u|^4 \phi_i^4 \\
& \leq c \sup\limits_{(x,t)\in M\times[0,T]} E_0\left(u(t), B_{2 R}\left(x_i\right)\right)\left(\int_M\left|\nabla D u\right|^2+R^{-2} E_0\left(u_0\right)\right) \\
& \leq c \epsilon_0\left(\int_M\left|\nabla D u\right|^2+R^{-2} E_0\left(u_0\right)\right) .
\end{aligned}\label{6.23}   
\end{equation}

   Replacing \eqref{6.23} into \eqref{hessian L2 estimate} and using the fact that $||\Delta u||_{L^2(M)}$ and $||Du||_{L^2(M)}$ are bounded by initial energy and the energy is decreasing with respect to time, we get the result for \eqref{L2-spacial bound for grad od du} and then \eqref{6.23} gives \eqref{L4-spacial bound for du}. Integrating \eqref{L2-spacial bound for grad od du} and \eqref{L4-spacial bound for du} with respect to time gives \eqref{L2 bound for grad od du} and \eqref{L4 bound for du} respectively. 
\end{proof}

\begin{lemma}\label{L2-estimate for gradiant of laplacianof u}
  Let $u\in C^\infty (M\times[0,T),N)$ be a solution of \eqref{sesqui-harmonic map flow}. Then there exist constants $c_1$ and $c_2$, such that
  $$
  \int_0^t\int_M|\nabla\Delta u|^2dv_gdt\leq c_1+c_2\int_0^T\int_M|Du|^4.
  $$
\end{lemma}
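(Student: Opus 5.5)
We test the flow equation against $\Delta^2 u$, or more precisely against the time derivative of the tension. The idea is to use the energy identity of Lemma~\ref{energy decreasing for sesqui-harmonic map}, which already controls $\int_0^T\int_M|\partial_t u|^2$ by $E_{ses}(u_0)$. That bound, together with the equation, will give control of $\nabla\Delta u$ after an integration by parts.

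\textbf{Step 1: rewrite the equation.} From \eqref{sesqui-harmonic map flow},
\[
\delta_2\Delta^2 u=-\partial_t u+\delta_1\Delta u+\delta_2 R^N(Du(e_i),\Delta u)Du(e_i).
\]
We pair this identity in $L^2(M)$ with $\Delta u$ (a section of $u^{-1}TN$). For the left side we integrate by parts with respect to the connection $\nabla$. Since $M$ is closed, this gives
\[
\int_M\langle\Delta^2u,\Delta u\rangle\,dv_g=-\int_M|\nabla\Delta u|^2\,dv_g .
\]

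\textbf{Step 2: the identity to integrate.} Combining Step 1 with the pairing, we obtain
\[
\delta_2\int_M|\nabla\Delta u|^2=\int_M\langle\partial_t u,\Delta u\rangle-\delta_1\int_M|\Delta u|^2-\delta_2\int_M\langle R^N(Du(e_i),\Delta u)Du(e_i),\Delta u\rangle .
\]
We then integrate this over $[0,t]$.

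\textbf{Step 3: the first two terms.}
\begin{itemize}
\item The term $\delta_1\int|\Delta u|^2$ comes with a good sign and can be dropped.
\item For the first term we use Cauchy--Schwarz:
\[
\int_0^t\int_M\langle\partial_t u,\Delta u\rangle\le \int_0^t\int_M|\partial_tu|^2+\int_0^t\int_M|\Delta u|^2 .
\]
By Lemma~\ref{energy decreasing for sesqui-harmonic map}, $\int_0^t\int_M|\partial_t u|^2\le E_{ses}(u_0)$ and $\sup_t\|\Delta u(t)\|_{L^2}^2\le c\,E_{ses}(u_0)$. Hence this term is bounded by $c(1+T)$, which is absorbed into $c_1$, a constant depending on $T$ and $u_0$.
\end{itemize}

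\textbf{Step 4: the curvature term (the main obstacle).} Since $N$ is compact, $R^N$ is bounded, so
\[
\left|\langle R^N(Du,\Delta u)Du,\Delta u\rangle\right|\le c\,|Du|^2|\Delta u|^2 .
\]
The difficulty is that $|\Delta u|^2$ is only bounded in $L^\infty_tL^2_x$, not in $L^\infty$. We first apply Young's inequality:
\[
\int_M|Du|^2|\Delta u|^2\le \int_M|Du|^4+\int_M|\Delta u|^4 .
\]
Then we control the $L^4$ norm of $\Delta u$ by the Gagliardo--Nirenberg inequality \eqref{G-N ineq}. We localize it with a partition of unity on $M$, or use its compact-manifold version with an additional lower-order term, and apply it to $|\Delta u|$. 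By Kato's inequality $|D|\Delta u||\le|\nabla\Delta u|$, this gives
\[
\int_M|\Delta u|^4\le c\|\Delta u\|_{L^2}^2\left(\|\nabla\Delta u\|_{L^2}^2+\|\Delta u\|_{L^2}^2\right).
\]
This is not small, since $\|\Delta u\|_{L^2}$ is only bounded, not small. Therefore I would split the curvature term differently, as
\[
\int_M|Du|^2|\Delta u|^2\le \|Du\|_{L^4}^2\|\Delta u\|_{L^4}^2 .
\]
Applying Gagliardo--Nirenberg to $\|\Delta u\|_{L^4}^2$ gives
\[
\|\Delta u\|_{L^4}^2\le c\|\Delta u\|_{L^2}\left(\|\nabla\Delta u\|_{L^2}+\|\Delta u\|_{L^2}\right).
\]
Young's inequality then yields
\[
\int_M|Du|^2|\Delta u|^2\le \frac{\delta_2}{2}\|\nabla\Delta u\|_{L^2}^2+c\|Du\|_{L^4}^4\|\Delta u\|_{L^2}^2+c\|Du\|_{L^4}^2\|\Delta u\|_{L^2}^2 .
\]
Since $\|\Delta u\|_{L^2}$ is bounded, the last two terms are at most $c\int_M|Du|^4+c$. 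We absorb the $\frac{\delta_2}{2}\|\nabla\Delta u\|_{L^2}^2$ term into the left side, integrate in time, and divide by $\delta_2/2$. This yields
\[
\int_0^t\int_M|\nabla\Delta u|^2\le c_1+c_2\int_0^T\int_M|Du|^4 .
\]

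The delicate points are the absorption step and making the Gagliardo--Nirenberg inequality rigorous on $M$ for sections of $u^{-1}TN$, via Kato's inequality and a partition of unity. We also assume $\delta_2>0$; if $\delta_2=0$ the statement is void and not needed.
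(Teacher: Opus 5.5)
Your proposal is correct and follows the paper's argument: pair the flow with $\Delta u$, integrate by parts to get $\delta_2\int_M|\nabla\Delta u|^2$, and absorb the curvature term by Gagliardo--Nirenberg using the uniform bound on $\|\Delta u\|_{L^2}$ from the energy identity. The paper does the absorption with Young's inequality $|Du|^2|\Delta u|^2\le\epsilon|\Delta u|^4+c_\epsilon|Du|^4$ for small $\epsilon$; this works despite your remark that this splitting is ``not small'', and your H\"older-then-Young variant is equivalent. The one noteworthy difference is that the paper evaluates $\int_0^t\int_M\langle\partial_t u,\Delta u\rangle$ exactly as $E_0(u_0)-E_0(u(t))$, which avoids the factor of $T$ your Cauchy--Schwarz bound introduces. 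That said, your retention of the lower-order term in Gagliardo--Nirenberg on the closed surface, which the paper omits, brings back a harmless $T$-dependence in $c_1$ anyway.
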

\begin{proof}
    Multiplying \eqref{sesqui-harmonic map flow} by $\Delta u$ we get
    \begin{equation}\label{11}
        \langle\partial_t u , \Delta u\rangle=\delta_1\langle\Delta u, \Delta u\rangle-\delta_2\langle\Delta^2u,\Delta u\rangle+\delta_2\langle R^N(u)(Du,\Delta u)Du,\Delta u\rangle
    \end{equation}
   by integration by parts and divergence theorem 
   $$
   \begin{aligned}
    \int_M\langle\Delta^2u,\Delta u\rangle
    &=-\int_M|\nabla\Delta u|^2
   \end{aligned}
   $$
   and also 
   $$
   \int_0^T\int_M\langle\partial_t u,\Delta u\rangle=-\frac{1}{2}\int_0^T\frac{d}{dt}\int_M|Du|^2=E_0(u(0))-E_0(u(T))
   $$
   By replacing all these in \eqref{11} and using \eqref{G-N ineq}, Young's inequality and lemma~\ref{energy decreasing for sesqui-harmonic map} we get
   \begin{equation}
       \begin{aligned}
           C_0&\geq E_0(u(0))-E_0(u(T))=\delta_1\int_0^T\int_M|\Delta u|^2+\delta_2\int_0^T\int_M|\nabla\Delta u|^2\\
           &+\delta_2\int_0^T\int_M\langle R^N(u)(Du,\Delta u)Du,\Delta u\rangle\\
           &\geq \delta_2\int_0^T\int_M|\nabla\Delta u|^2-\epsilon\int_0^T\int_M|\Delta u|^4-c'\int_0^T\int_M|Du|^4\\
           &\geq \delta_2\int_0^T\int_M|\nabla\Delta u|^2-\epsilon\int_0^T(\int_M|\nabla\Delta u|^2)(\int_M|\Delta u|^2)-\int_0^T\int_M|Du|^4\\
           &\geq c\int_0^T\int_M|\nabla\Delta u|^2-c'\int_0^T\int_M|Du|^4
       \end{aligned}
   \end{equation}
   Where $c'$ depends on $\epsilon$ and $E_{ses}(u_0)$.

   as a result we get
   $$
   \int_0^T\int_M|\nabla\Delta u|^2\leq \frac{C_0}{c}+\frac{c'}{c}\int_0^T\int_M|Du|^4=c_1+c_2\int_0^T\int_M|Du|^4
   $$
\end{proof}

Now we derive a local monotonicity formula for Sesqui-Harmonic flow.

\begin{lemma}\label{monotonicity formula for sequi-harmonic maps}
Let $u\in C^\infty (M\times[0,T),N)$ be a solution of \eqref{sesqui-harmonic map flow}. Then there exists constants $R_0$ and $c=c(M,N)$, such that for any $R\in(0, R_0]$, there holds

\begin{equation}\label{monotonicity formula for harmonic part}
    E_0(u(T),B_R^M(X))\leq E_0(u(0),B_{2R}^M(x))+\frac{cT}{R^2}+cT.
\end{equation} 
\end{lemma}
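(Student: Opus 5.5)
We will prove the bound by a localized energy identity for $E_0$. The key point is to insert the flow equation into \emph{every} term containing $\partial_t u$. If we only used Young's inequality against $\int|\partial_t u|^2$ and then invoked Lemma~\ref{energy decreasing for sesqui-harmonic map}, we would pick up an additive constant $E_{ses}(u_0)$ not of the form $cT$. So the goal is a differential inequality that holds pointwise in time,
\begin{equation*}
\frac{d}{dt}\,\frac12\int_M\eta^2|Du|^2\,dv_g\;\le\;\frac{c}{R^2}+c ,
\end{equation*}
with $\eta$ the cut-off \eqref{cut-off function}. Integrating this over $[0,T]$ and using $\eta\equiv1$ on $B_R^M(x)$ and $\operatorname{supp}\eta\subset B_{2R}^M(x)$ gives \eqref{monotonicity formula for harmonic part} directly. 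We take $R_0$ as in Lemma~\ref{Key lemma}, a lower bound for $i_M$, so that the balls are geodesic balls and $\eta$ exists.

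First, differentiate and integrate by parts:
\begin{equation*}
\frac{d}{dt}\frac12\int_M\eta^2|Du|^2=-\int_M\eta^2\langle\partial_t u,\Delta u\rangle-2\int_M\eta\langle\partial_t u,D\eta\cdot Du\rangle .
\end{equation*}
In both terms we replace $\partial_t u$ by the right-hand side of \eqref{sesqui-harmonic map flow}.

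\emph{Second-order part.} The $\delta_1$ contribution gives $-\delta_1\int\eta^2|\Delta u|^2\le0$ plus the cross term $-2\delta_1\int\eta\langle\Delta u,D\eta\, Du\rangle$. By Young's inequality the cross term is at most $\frac{\delta_1}{2}\int\eta^2|\Delta u|^2+\frac{c}{R^2}E_0(u(t))$, and $E_0(u(t))\le E_{ses}(u_0)/\delta_1$ by Lemma~\ref{energy decreasing for sesqui-harmonic map}.

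\emph{Fourth-order part.} Integrating by parts once,
\begin{equation*}
\delta_2\int\eta^2\langle\Delta^2u,\Delta u\rangle=-\delta_2\int\eta^2|\nabla\Delta u|^2-2\delta_2\int\eta\langle\nabla\Delta u,D\eta\,\Delta u\rangle ,
\end{equation*}
and the last term is bounded by $\frac{\delta_2}{4}\int\eta^2|\nabla\Delta u|^2+\frac{c}{R^2}E_2(u(t))$. In the cross term $2\delta_2\int\eta\langle\Delta^2 u,D\eta\,Du\rangle$ we move one derivative onto $\eta D\eta\, Du$. This produces $\int|\nabla\Delta u|\bigl(\eta|D^2\eta||Du|+|D\eta|^2|Du|+\eta|D\eta||\nabla Du|\bigr)$, which is at most $\frac{\delta_2}{4}\int\eta^2|\nabla\Delta u|^2+\frac{c}{R^2}\int_{B_{2R}}|\nabla Du|^2+\frac{c}{R^4}\int_{B_{2R}}|Du|^2$. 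We then control $\int|\nabla Du|^2$ on the support by the Hessian estimate \eqref{hessian L2 estimate}. This uses $\|\Delta u\|_{L^2}$ and $\|Du\|_{L^2}$, both bounded by $E_{ses}(u_0)$, together with the $L^4$ term; the latter is handled in the next step.

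\emph{Curvature terms (main obstacle).} The most delicate terms are $\delta_2\int\eta^2 R^N(Du,\Delta u,Du,\Delta u)$ and its analogue paired with $\eta D\eta\,Du$. They are bounded by $c\int\eta^2|Du|^2|\Delta u|^2$ and $\frac{c}{R}\int\eta|Du|^3|\Delta u|$. We first apply H\"older to get $\|Du\|_{L^4}^2\|\eta\Delta u\|_{L^4}^2$. Then \eqref{G-N ineq} applied to $\eta\Delta u$ and to $\eta Du$ reduces everything to $\|\Delta u\|_{L^2}$, $\|Du\|_{L^2}$ (both controlled by $E_{ses}(u_0)$), $\int\eta^2|\nabla\Delta u|^2$, and $\int|\nabla Du|^2$. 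The $\int\eta^2|\nabla\Delta u|^2$ contribution must be absorbed into the good term $-\frac{\delta_2}{2}\int\eta^2|\nabla\Delta u|^2$. The first thing we will try is to exploit the two-dimensional structure, namely $\|\nabla Du\|_{L^2}\lesssim\|\Delta u\|_{L^2}+\ldots$ from \eqref{hessian L2 estimate}, and use the fact that $|Du|^2|\Delta u|^2$ is quartic in first and second derivatives, so that a Gagliardo--Nirenberg interpolation closes with a power of $\int\eta^2|\nabla\Delta u|^2$ strictly less than one. Young's inequality then splits this into $\frac{\delta_2}{8}\int\eta^2|\nabla\Delta u|^2+c$, with $c$ depending on $E_{ses}(u_0)$ and on $(M,N)$ only through the curvature bound and the Sobolev constants. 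This absorption is the step we expect to be the genuine difficulty, since no smallness of the local energy is assumed here.

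Collecting all terms yields the pointwise-in-time bound displayed above. Integrating it over $(0,T)$ then gives \eqref{monotonicity formula for harmonic part}.
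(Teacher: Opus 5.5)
Your objection to the plain Young's-inequality route is well founded, and it is the route the paper takes. The paper bounds $-\int_M e_i(\eta^4)\langle\partial_t u,Du(e_i)\rangle-\int_M\eta^4\langle\partial_t u,\Delta u\rangle$ by $c\int_M\eta^4|\partial_t u|^2+\frac{c}{R^2}\int_M\eta^2|Du|^2+c\int_M\eta^4|\Delta u|^2$ and integrates in time using Lemma~\ref{energy decreasing for sesqui-harmonic map}. This keeps every fourth-order and curvature term hidden inside $\partial_t u$ and needs no smallness. What it actually delivers, though, is \eqref{monotonicity formula for harmonic part} plus a remainder $c\int_0^T\int_M|\partial_t u|^2\le c\,E_{ses}(u_0)$, which tends to $0$ with $T$ for a fixed flow but is not $O(T)$.

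Your replacement, however, has a genuine gap, exactly at the step you flagged. Your bound for $\frac{c}{R^2}\int_{B_{2R}}|\nabla Du|^2$ goes through \eqref{hessian L2 estimate}, and your treatment of the curvature terms goes through H\"older. Both need a bound on $\|Du\|_{L^4}$, or at least on $\|Du\|_{L^p}$ for some $p>2$. After H\"older this is unavoidable, because $\|\eta\Delta u\|_{L^\infty}$ is not controlled by $W^{1,2}$ in two dimensions. The only such bound available is \eqref{L4-spacial bound for du} from Lemma~\ref{Key lemma}, and it requires $\sup_{x,t}E_0(u(t);B_{2R}^M(x))<\epsilon_0$. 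This lemma does not assume that. Indeed, in the proof of Theorem~\ref{Global solution} the lemma is used precisely to propagate that smallness.

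Nothing depending only on $E_{ses}(u_0)$, $R$ and $\int\eta^2|\nabla\Delta u|^2$ can take its place. Take $N=S^2$ and a degree-one conformal map concentrated at scale $\lambda^{-1}$, glued to a constant away from the concentration point. Its $E_0$ is bounded, and $E_2$ and $\int_M|\nabla\Delta u|^2$ stay bounded (they even tend to $0$). Yet $\int_M|Du|^p\sim\lambda^{p-2}$ for $p>2$ and $\int_M|\nabla Du|^2\sim\lambda^2$.

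Scaling explains why. Under $u\mapsto u(\lambda\,\cdot)$ in two dimensions, $\int\eta^2|Du|^2|\Delta u|^2$ and $\int\eta^2|\nabla\Delta u|^2$ both scale like $\lambda^4$ while $E_0$ is invariant. So the curvature term is of the same order as your good term. The interpolation you have in mind becomes subcritical only by way of an estimate like $\|D^2u\|_{L^2}\lesssim\|\Delta u\|_{L^2}+\|Du\|_{L^2}$. That holds for the Euclidean Laplacian but, as the example shows, fails for the tension field unless the energy is small. Hence the pointwise-in-time inequality is not established.

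A smaller, fixable point: with weight $\eta^2$, the term $\int|\nabla\Delta u|\,|D\eta|^2|Du|$ carries no factor of $\eta$ and cannot be absorbed into $\int\eta^2|\nabla\Delta u|^2$. Use $\eta^4$, as the paper does.

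Your scheme does go through under the small-energy hypothesis, because Lemma~\ref{Key lemma} then controls $\|Du\|_{L^4}$ and $\|\nabla Du\|_{L^2}$ at each time. The price is higher negative powers of $R$ and a continuity argument to keep the hypothesis in force. Without that hypothesis, the honest conclusion is the paper's estimate with the remainder $c\int_0^T\int_M|\partial_t u|^2$.
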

\begin{proof}

Using the cut-off function $\eta$ defined in \eqref{cut-off function} and a local orthonormal frame $\{e_i\}$ We compute

\begin{equation}\label{proof of monotonicity formula}
    \begin{aligned}
\frac{1}{2}\frac{d}{dt}\int_M \eta^4 |Du|^2
&= -\int_M e_i(\eta^4)\langle \partial_t u, Du(e_i)\rangle - \int_M \eta^4 \langle \partial_t u, \Delta u\rangle \\
&\leq c \int_M \eta^4 |\partial_t u|^2 + \frac{c}{R^2} \int_M \eta^2 |Du|^2 + c \int_M \eta^4 |\Delta u|^2.
\end{aligned}
\end{equation}
Integrating in time and applying Lemma~\ref{energy decreasing for sesqui-harmonic map}, we obtain \eqref{monotonicity formula for harmonic part}.
\end{proof}

A series of estimates for biharmonic maps was established by Lamm~\cite{lamm2004biharmonic}. In the present setting, these arguments carry over with minor modifications. In~\cite{lamm2004biharmonic}, the $L^4$ norm of $Du$ is controlled using the assumption that the target manifold $N$ has nonpositive sectional curvature. Since no curvature assumption on $N$ is imposed in this paper, we instead rely on Lemma~\ref{Key lemma}.

Furthermore, the appearance of the factor $R^6$ in the denominator, instead of $R^4$ as in Lamm~\cite{lamm2004biharmonic}, is due to the additional factor $R^2$ arising in the denominator of Lemma~\ref{Key lemma}.

Apart from this modification, the remaining estimates follow by the same arguments as in Lamm~\cite{lamm2004biharmonic}. Therefore, we state these estimates without proof, except for \eqref{Laplace 2 of u}, whose proof uses \eqref{sesqui-harmonic map flow} directly.

\begin{lemma}\label{lamm lemma-1}
   Let $u\in C^\infty(M\times[0,T),N)$ be a solution of \eqref{sesqui-harmonic map flow} and let $\eta$ be as in \eqref{cut-off function}. Then there exist constants $c>0$, $R_0>0$ and $\epsilon_0>0$ such that if $\sup\limits_{(x,t)\in M\times [0,T)}E_0(u(t);B_{2R}^M(x))<\epsilon_0$, then for any $t\in [0,T)$ and $R\in(0,R_0]$ we have
   \begin{equation}\label{2.32}
       \int_0^t\int_M\eta^4|\nabla^2Du|^2\leq \frac{ct}{R^4}+c,
   \end{equation}
    \begin{equation}\label{counterpart of 2.32}
       \int_0^t\int_M\eta^4|D^3u|^2\leq \frac{ct}{R^4}+c.
   \end{equation}

    \begin{equation}\label{Laplace 2 of u}
       \int_0^t\int_M\eta^4|\Delta^2u|^2\leq \frac{ct}{R^6}+c.
   \end{equation}
   \begin{equation}\label{2.34}
       \int_0^t\int_M\eta^4|\nabla^3Du|^2\leq \frac{ct}{R^6}+c,
   \end{equation}
    \begin{equation}\label{counterpart of 2.34}
       \int_0^t\int_M\eta^4|D^4u|^2\leq \frac{ct}{R^6}+c.
   \end{equation}

\end{lemma}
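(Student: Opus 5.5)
The plan is to prove the estimates in the order stated, each with localized energy identities tested against powers of $\eta$. Lemma~\ref{Key lemma} supplies the global control $\int_0^T\int_M(|\nabla Du|^2+|Du|^4)\le cT(1+R^{-2})$. Lemma~\ref{L2-estimate for gradiant of laplacianof u} then upgrades this to $\int_0^t\int_M|\nabla\Delta u|^2\le c+ct(1+R^{-2})$.

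For \eqref{2.32}, test the flow \eqref{sesqui-harmonic map flow} against $\eta^4\Delta^2u$, or equivalently differentiate $\int_M\eta^4|\Delta u|^2$ in time.
\begin{itemize}
\item The principal term gives $\delta_2\int\eta^4|\nabla\Delta u|^2$, or, after one more step, the Hessian-type quantity.
\item The $\delta_1$ term has a good sign up to commutators.
\item Curvature terms are cubic. They are bounded using \eqref{G-N ineq} applied to $\eta Du$ and $\eta\nabla Du$, together with the smallness $\epsilon_0$ of $E_0$ on $B_{2R}$.
\item A Bochner/Ricci commutation, with constants depending on $M$ and $N$, converts $\Delta$-quantities into $\nabla^2Du$. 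This gives \eqref{2.32}.
\end{itemize}
The counterpart \eqref{counterpart of 2.32} follows from the embedding identity $\Delta^E u=\Delta u-A(u)(Du,Du)$ and its derivatives. These express $D^3u$ through $\nabla^2 Du$ plus lower-order products such as $|Du||\nabla Du|$ and $|Du|^3$, which are controlled by Lemma~\ref{Key lemma} and Gagliardo--Nirenberg.

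For \eqref{Laplace 2 of u}, which the paper singles out, I would use the equation directly:
\[
\delta_2\Delta^2u=-\partial_tu+\delta_1\Delta u+\delta_2R^N(Du(e_i),\Delta u)Du(e_i).
\]
Square and integrate against $\eta^4$ over $M\times[0,t]$.
\begin{itemize}
\item The term $\int\!\!\int|\partial_tu|^2\le E_{ses}(u_0)$ by Lemma~\ref{energy decreasing for sesqui-harmonic map}.
\item The term $\int\!\!\int|\Delta u|^2\le ct$.
\item The curvature term is bounded by $\int\!\!\int\eta^4|Du|^4|\Delta u|^2$.
\end{itemize}
For the last term, use H\"older and Ladyzhenskaya \eqref{G-N ineq} twice. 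First,
\[
\int\eta^4|Du|^4|\Delta u|^2\le \|\eta Du\|_{L^8}^4\,\|\eta\Delta u\|_{L^4}^2 ,
\]
where $\|\eta\Delta u\|_{L^4}^2\le c\|\Delta u\|_{L^2}\bigl(\|\eta\nabla\Delta u\|_{L^2}+R^{-1}\|\Delta u\|_{L^2}\bigr)$. The factor $\|\eta Du\|_{L^8}^4$ is bounded through the Sobolev embedding $W^{1,4}\subset L^8$ in two dimensions, or $\|Du\|_{L^\infty}$-type interpolation, by $\|Du\|_{L^4}^2\|\nabla Du\|_{L^4}^2$. The factor $\|\nabla Du\|_{L^4}^2$ is again interpolated via \eqref{G-N ineq} between $\nabla Du\in L^2$ and $\nabla^2Du$.

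Integrating in time, the spatial bound \eqref{L2-spacial bound for grad od du}, uniform in $t$, together with \eqref{2.32} and Lemma~\ref{L2-estimate for gradiant of laplacianof u}, gives terms of size $c(1+R^{-2})\cdot(ct R^{-4}+c)$. This accounts for the $R^{-6}$. Any remaining top-order piece, such as $\int\eta^4|\nabla^2Du|^2$ multiplied by $\epsilon_0$, is absorbed or is already controlled.

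Finally, \eqref{2.34} follows from \eqref{Laplace 2 of u} by local elliptic $L^2$-regularity for the rough Laplacian applied to $\eta^2\nabla Du$, twice, with commutators involving curvature of $M$ and $N$ and lower-order products. The lower-order products are handled as above, and cutoff derivatives contribute powers of $R^{-1}$ that stay within $R^{-6}$. \eqref{counterpart of 2.34} then follows by the same translation to Euclidean derivatives used for \eqref{counterpart of 2.32}.

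The main obstacle is the nonlinear term in \eqref{Laplace 2 of u} and the analogous cubic terms in \eqref{2.32}. These need careful interpolation with cutoffs so that the highest-order quantities appear either with small coefficients ($\epsilon_0$ or Young's $\epsilon$) or only through already-established bounds, without circularity. I would first try to close \eqref{2.32} with all $\epsilon_0$-absorption, and then treat \eqref{Laplace 2 of u} purely as a consequence.
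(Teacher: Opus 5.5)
Your argument for \eqref{Laplace 2 of u}, the only estimate the paper actually proves (it defers the others to Lamm), is essentially the paper's: substitute the flow for $\Delta^2u$, square against $\eta^4$, bound the $\partial_tu$ and $\Delta u$ terms by Lemma~\ref{energy decreasing for sesqui-harmonic map}, treat $\eta^4|Du|^4|\Delta u|^2$ by Ladyzhenskaya with cutoffs, and close with \eqref{2.32} and Lemma~\ref{L2-estimate for gradiant of laplacianof u}. The paper instead splits the curvature term pointwise by Young, $|Du|^4|\Delta u|^2\le |Du|^8+|\Delta u|^4$, which avoids two harmless slips in your H\"older version: as written its right-hand side only dominates $\int\eta^6|Du|^4|\Delta u|^2$, and the scale-correct interpolation for compactly supported $f$ is $\|f\|_{L^8}^4\le c\|f\|_{L^4}^3\|Df\|_{L^4}$, not $\|f\|_{L^4}^2\|Df\|_{L^4}^2$; also, testing against $\eta^4\Delta^2u$ produces $\delta_2\int\eta^4|\Delta^2u|^2$ as the principal term, not $\delta_2\int\eta^4|\nabla\Delta u|^2$, so for \eqref{2.32} the cleaner route is the one you also mention, namely Lemma~\ref{L2-estimate for gradiant of laplacianof u} plus the Bochner commutation.
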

\begin{proof}

  We use \eqref{sesqui-harmonic map flow} and replace $\Delta^2u$, with other terms in the equation. 
    $$
    \begin{aligned}
        \delta_2^2\int_0^t\int_M\eta^4|\Delta^2u|^2dv_g&=-\delta_2\int_0^t\int_M\eta^4\langle\partial_tu,\Delta^2 u\rangle+\delta_1\delta_2\int_0^t\int_M\eta^4\langle\Delta^2u,\Delta u\rangle\\
        &+\delta_2^2\int_0^t\int_M\langle R(Du,\Delta u)Du,\Delta^2u\rangle\\
         &=\int_0^t\int_M\eta^4|\partial_tu|^2-2\delta_1\int_0^t\int_M\eta^4\langle\partial_tu,\Delta u\rangle\\
        &-2\delta_2\int_0^t\int_M\eta^4\langle\partial_tu,R(Du,\Delta u)Du\rangle\\
        &+\delta_1^2\int_0^t\int_M\eta^4|\Delta u|^2+2\delta_1\delta_2\int_0^t\int_M\eta^4\langle\Delta u, R(Du,\Delta u)Du\rangle\\
        &+\delta_2^2\int_0^t\int_M\eta^4|R(Du,\Delta u)Du|^2\\
        &\leq c\int_0^t\int_M\eta^4|\partial_tu|^2+c\int_0^t\int_M\eta^4|\Delta u|^2\\
        &+c\int_0^t\int_M\eta^4|\partial_tu||Du|^2|\Delta u|+c\int_0^t\int_M\eta^4|\Delta u|^2|Du|^4\\
        &\leq c\int_0^t\int_M\eta^4|\partial_tu|^2+c\int_0^t\int_M\eta^4|\Delta u|^2\\
        &+c\int_0^t\int_M\eta^4|\Delta u|^4+c\int_0^t\int_M\eta^4|Du|^8.
    \end{aligned}
    $$

   Using \eqref{G-N ineq} and lemma~\ref{Key lemma}, we have
   \begin{equation}\label{L8 of Du}
        \begin{aligned}
       \int_M\eta^4|Du|^8=\int_M(\eta|Du|^2)^4&\leq c (\int_{[\eta>0]}|Du|^4)(\int(\eta^2|D\eta|^2|Du|^4+\eta^4 (D|Du|^2)^2))\\
       &\leq \frac{c}{R^6}+\frac{c}{R^2}\int_M\eta^4|Du|^2|\nabla Du|^2.
   \end{aligned}
  \end{equation} 
  For the last integral we use \eqref{2.32} and lemma~\ref{Key lemma} and obtain 
  \begin{equation}
  \begin{aligned}
    \int_M\eta^4|Du|^2|\nabla Du|^2&\leq(\int_{[\eta>0]}|Du|^4)^\frac{1}{2}(\int_M\eta^8|\nabla Du|^4)^\frac{1}{2}\\
     &\leq c\int_{[\eta>0]}|Du|^4\\
     &+c(\int_M\eta^4|\nabla Du|^2)(\int_M\eta^2|D\eta|^2|\nabla Du|^2+\int_M\eta^4|\nabla^2Du|^2).
     \end{aligned}
  \end{equation}
  As a result
  \begin{equation}\label{L8 of Du new result}
      \int_M\eta^4|Du|^8\leq \frac{c}{R^6}+c\int_M\eta^4|\nabla^2Du|^2.
  \end{equation}
   
   Once more, using \eqref{G-N ineq}  and  and bounded initial data gives
   \begin{equation}\label{L4 of laplacian new one}
    \begin{aligned}
       \int_M\eta^4|\Delta u|^4&\leq c(\int_M|\Delta u|^2)(\int_M\eta^2|D\eta|^2|\Delta u|^2+\int_M\eta^4|\nabla\Delta u|^2)\\
       &\leq \frac{c}{R^2}+c\int_\eta^4|\nabla\Delta u|^2.
   \end{aligned}
 \end{equation} 
Integrating \eqref{L8 of Du new result} and \eqref{L4 of laplacian new one} with respect to time, and using \eqref{2.32} together with Lemma~\ref{L2-estimate for gradiant of laplacianof u}, we obtain \eqref{Laplace 2 of u}.
\end{proof}

\subsection{Higher regularity}
In this section we follow the arguments of Lamm~\cite{lamm2004biharmonic} to derive $C^k$ estimates for solutions to \eqref{sesqui-harmonic map flow} on a short time interval prior to the first singular time, under a suitable smallness assumption on the energy. We begin by stating an $L^2$ estimate for $\partial_t u$.

\begin{lemma}\label{L2-estimate for time derivative}
    Let $u\in C^\infty(M\times[0,T),N)$ be a solution of \eqref{sesqui-harmonic map flow}. Then there exist constants $c$ and $\epsilon_0>0$, such that if $\sup\limits_{(x,t)\in M\times[0,T)}E_{0}(u(t);B_{2R}^M(x))<\epsilon_0$, there exists $0<\delta<min\{T,cR^6\}$, such that for all $s,t\in[0,T)$ with $s<t$ and $|t-s|<\delta$ we have
    \begin{equation}
        \sup\limits_{s\leq t'\leq t}\int_M|\partial_tu(t')|^2\leq c(R)\int_M|\partial_tu(s)|^2+c(R).
    \end{equation}
\end{lemma}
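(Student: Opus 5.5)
We would follow Lamm's argument for the biharmonic flow: differentiate the equation in time and test with $\partial_t u$. Set $w:=\partial_t u$, a section of $u^{-1}TN$. Applying $\nabla_t$ to \eqref{sesqui-harmonic map flow} and commuting $\nabla_t$ with the spatial covariant derivatives introduces curvature terms, giving
\[
\nabla_t w=\delta_1\Delta w-\delta_2\Delta^2 w+\mathcal{R},
\]
where $\mathcal{R}$ is a sum of terms of the schematic form $R^N * Du * Du * \nabla\Delta u * \nabla w$, $R^N*Du*Du*\Delta u* w$, $\nabla R^N * Du^{3}*\Delta u * w$, $R^N * Du * \nabla Du * w$, and so on. The terms with two derivatives on $w$ are moved onto the test function by integration by parts. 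Taking the inner product with $w$ and integrating over $M$ then gives
\[
\frac12\frac{d}{dt}\int_M|w|^2+\delta_1\int_M|\nabla w|^2+\delta_2\int_M|\Delta w|^2=\int_M\langle\mathcal{R},w\rangle .
\]
The $\delta_1$ term has a good sign, so it can be dropped or used as extra coercivity. We work globally, with no cut-off, since the lemma is stated with global integrals.

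Next we estimate $\int_M\langle\mathcal R,w\rangle$. The strategy is to bound every term by $\varepsilon\int_M|\Delta w|^2+\varepsilon\int_M|\nabla w|^2$ plus $F(t)\int_M|w|^2$, where $F$ is integrable on short time intervals, and then absorb the $\varepsilon$ terms on the left.

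1. Each term is bounded using Hölder's inequality, \eqref{G-N ineq} for $w$ and $\nabla w$, and interpolation $\|\nabla w\|_{L^2}^2\le \|w\|_{L^2}\|\Delta w\|_{L^2}+c\|w\|_{L^2}^2$.
2. For a term such as $\int |Du|^2|\nabla\Delta u||\nabla w||w|$ we take $|w||\nabla w|$ in $L^{4/3}$-type norms via \eqref{G-N ineq}. The coefficients then involve $\|Du\|_{L^\infty}$, $\|\nabla Du\|_{L^4}$ and $\|\nabla\Delta u\|_{L^2}$.
3. We control these coefficients by Sobolev embedding on $M$ in terms of $\|D^3u\|_{L^2}$, $\|D^4 u\|_{L^2}$ and the bounds of Lemma~\ref{Key lemma}.

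The outcome should be a differential inequality
\[
\frac{d}{dt}\int_M|w|^2\le F(t)\Big(1+\int_M|w|^2\Big),\qquad F(t)=c\big(1+\|D^4u(t)\|_{L^2}^2+\|D^3u(t)\|_{L^2}^2+\|\nabla\Delta u(t)\|_{L^2}^2\big)^{p}
\]
with $p\le1$, or otherwise with exponents arranged so that $F$ is integrable.

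For the time integrability we sum Lemma~\ref{lamm lemma-1} over a finite cover by balls $B_{R}^M(x_i)$. This gives $\int_s^t\int_M|D^4u|^2\le c|t-s|/R^6+c$, and similarly for $D^3u$; Lemma~\ref{L2-estimate for gradiant of laplacianof u} combined with Lemma~\ref{Key lemma} handles $\nabla\Delta u$. The constant $c$ in these bounds depends only on the initial energy. Gronwall's inequality on $[s,t']$ then yields
\[
\int_M|w(t')|^2\le e^{\int_s^{t'}F}\Big(\int_M|w(s)|^2+\int_s^{t'}F\Big).
\]
If $|t-s|<\delta\le cR^6$, then $\int_s^{t}F\le c(R)$, and the claim follows with $c(R)=e^{c(R)}$.

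The main obstacle is the bookkeeping of the commutator terms in $\mathcal{R}$ with the highest number of derivatives, namely $R^N*Du*Du*\nabla\Delta u*\nabla w$ and $\nabla R^N * Du^3 *\nabla Du* w$. Their coefficients must enter $F$ at most quadratically in the norms controlled by Lemma~\ref{lamm lemma-1}; otherwise $F$ is not integrable in time. If a term has too high a power, we would first try to give more of the burden to $\|\Delta w\|_{L^2}$ through Gagliardo–Nirenberg, using that we are in dimension two. Failing that, we would follow Lamm's device of bounding $\|Du\|_{L^\infty}^2\le c\|Du\|_{L^2}\|D^3u\|_{L^2}+c$, so that only first powers of the $L^2$-in-time quantities appear.
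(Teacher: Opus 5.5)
Your plan (differentiate the flow in time, test with $w=\partial_t u$, absorb the top-order terms into $\delta_2\int_M|\Delta w|^2$ via Gagliardo--Nirenberg, and close with Gronwall using the space--time bounds of Lemmas~\ref{Key lemma}, \ref{L2-estimate for gradiant of laplacianof u} and \ref{lamm lemma-1}) is the Lamm argument the paper defers to, so it is correct and essentially the paper's route. One correction to your bookkeeping: by scaling, terms like $R^N*Du*Du*\nabla\Delta u*\nabla w$ cannot occur, and the genuine critical terms ($Du*Du*w*\Delta w$, $Du*\Delta u*w*\nabla w$, $Du*\nabla\Delta u*w*w$, $Du^{2}*\Delta u*w*w$, $Du^{4}*w*w$) are handled using $\|w\|_{L^\infty}^2\le c\|w\|_{L^2}\|w\|_{W^{2,2}}$, the bounded quantities $\|Du\|_{L^4}$, $\|\nabla Du\|_{L^2}$, $\|\Delta u\|_{L^2}$, and the fact that $\|\nabla\Delta u\|_{L^2}^2$ is integrable in time, so the obstacle you anticipate does not arise.
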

\begin{proof}
The proof is the same as that of Lamm~\cite{lamm2004biharmonic}.

\end{proof}
Using the previously derived estimates, we establish $L^2$ estimates for higher-order derivatives of $u$, in particular for $D^4 u$.
\begin{lemma}\label{lamm lemma 2}

    Let $u\in C^\infty(M\times[0,T),N)$ be a solution of \eqref{sesqui-harmonic map flow}. Then there exist constants $c$ and $\epsilon_0>0$, such that if $\sup\limits_{(x,t)\in M\times[0,T)}E_0(u(t);B_{2R}^M(x))<\epsilon_0$, there exists $0<\delta<min\{T,cR^6\}$, such that for all $0<\frac{\delta}{4}\leq t'<T$ we have
    \begin{equation}\label{nabla3du L2(M)-estimate}
        \int_M|\nabla^3Du(.,t')|^2\leq c,
    \end{equation}
    \begin{equation}\label{D4u L2(M)-estimate}
        \int_M|D^4u(.,t')|^2\leq c.
    \end{equation}
    The constant $c$ depends on $E_0(u_0)$, $E_2(u_0)$, $M$, $N$, $R$. 
\end{lemma}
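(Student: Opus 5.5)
The argument follows Lamm~\cite{lamm2004biharmonic}. The overall route is to control $\partial_t u$ in $L^\infty_tL^2_x$ on intervals of length $\delta$, then read off $D^4u$ from the equation by elliptic regularity. Covering $M$ by finitely many balls $B_R^M(x_i)$ with cut-off functions $\eta_i$ as in \eqref{cut-off function}, it suffices to prove the local versions $\int_M\eta^4|\nabla^3Du(t')|^2\le c$ and $\int_M\eta^4|D^4u(t')|^2\le c$ and then sum over the finite cover.

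\textbf{Step 1: a good initial time.} Fix $t'\ge\delta/4$ and set $s_0=t'-\delta/4\ge0$. By Lemma~\ref{energy decreasing for sesqui-harmonic map}, $\int_0^T\int_M|\partial_tu|^2\le E_{ses}(u_0)$. Hence on $[s_0,s_0+\delta/8]$ there is a time $s$ with
\begin{equation*}
\int_M|\partial_tu(s)|^2\le \frac{8E_{ses}(u_0)}{\delta}.
\end{equation*}
Since $t'-s<\delta$, Lemma~\ref{L2-estimate for time derivative} gives
\begin{equation*}
\int_M|\partial_tu(t')|^2\le c(R)\Big(1+\frac{1}{\delta}\Big)\le c,
\end{equation*}
because $\delta$ is fixed in terms of $R$ and $T$. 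The same bound holds uniformly for all times in $[t'-\delta/8,t']$.

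\textbf{Step 2: elliptic bound for $\Delta^2u$ at fixed time.} Solve the flow equation for the leading term:
\begin{equation*}
\delta_2\Delta^2u=-\partial_tu+\delta_1\Delta u+\delta_2R^N(Du,\Delta u)Du .
\end{equation*}
Multiply by $\eta^4$ and take $L^2$ norms at time $t'$.
\begin{itemize}
\item The term $\partial_tu$ is bounded by Step 1.
\item The term $\Delta u$ is bounded by the energy.
\item For the curvature term, $\int\eta^4|Du|^4|\Delta u|^2\le \int\eta^4|\Delta u|^4+\int\eta^4|Du|^8$. Estimate these exactly as in \eqref{L4 of laplacian new one} and \eqref{L8 of Du}, but now at a fixed time. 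This gives bounds by $c(R)+c\int\eta^4|\nabla\Delta u|^2+c\int\eta^4|\nabla^2Du|^2$.
\end{itemize}
Interpolate the third-order quantities between second and fourth order: $\int\eta^4|\nabla^2Du|^2\le \varepsilon\int\eta^4|\nabla^3Du|^2+c_\varepsilon(R)\int_{[\eta>0]}|\nabla Du|^2$. The last integral is bounded by \eqref{L2-spacial bound for grad od du}.

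\textbf{Step 3: from $\Delta^2u$ to $D^4u$.} Expanding $\Delta^2u$ in the embedding, the principal part is $(\Delta^E)^2u$ plus lower-order terms polynomial in $D^ju$, $j\le3$. Local $L^2$ elliptic estimates for the bilaplacian with cut-off $\eta$ then give
\begin{equation*}
\int\eta^4|D^4u|^2\le c\int\eta^4|\Delta^2u|^2+\text{lower order}+c(R).
\end{equation*}
Taking $\varepsilon$ small absorbs the interpolated top-order terms into the left-hand side. The main obstacle is the nonlinear lower-order terms, for instance $|D^3u||Du|$, $|D^2u|^2$ and $|D^2u||Du|^2$ weighted by $\eta^4$ and squared. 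Each must be reduced via \eqref{G-N ineq}, the smallness $\epsilon_0$ and Lemma~\ref{Key lemma} to an $\varepsilon$-multiple of the top-order term plus $c(R)$; no term may be left with a power of $|D^4u|$ that cannot be absorbed. These are handled term by term with Gagliardo--Nirenberg, as in the proof of \eqref{L8 of Du new result}. The cut-off derivatives $D\eta$, $D^2\eta$ only produce $R$-dependent constants, which are harmless since $c$ may depend on $R$. This yields \eqref{D4u L2(M)-estimate}, and \eqref{nabla3du L2(M)-estimate} follows since $\nabla^3Du$ differs from $D^4u$ by similarly controlled lower-order terms.
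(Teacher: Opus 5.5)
Your proposal is correct and follows the route the paper takes, which is simply to invoke Lamm's argument: find a good time slice from the energy identity, propagate the $L^2$ bound on $\partial_t u$ with Lemma~\ref{L2-estimate for time derivative}, then obtain the fourth-order bound from the equation using elliptic estimates and Gagliardo--Nirenberg interpolation. As the paper remarks, the only new ingredient compared with Lamm is the term $\delta_1\Delta u$, and you correctly bound it by the energy.
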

\begin{proof}
The proof is essentially the same as in Lamm~\cite{lamm2004biharmonic}, since the additional term arising from the harmonic part, namely $\|\Delta u\|_{L^2(M)}$, is bounded in terms of the initial data.

\end{proof}
We use \eqref{nabla3du L2(M)-estimate} and \eqref{D4u L2(M)-estimate} to prove the higher regularity.
\begin{thm}\label{uniform holder bounds for all derivatives}
     Let $u\in C^\infty(M\times[0,T),N)$ be a solution of \eqref{sesqui-harmonic map flow}. Then there exists $\epsilon_0>0$, such that if $\sup\limits_{(x,t)\in M\times[0,T)}E_0(u(t);B_{2R}^M(x))<\epsilon_0$, there exists $0<\delta<min\{T,cR^6\}$ such that the Holder norms of $u$ and all derivatives of $u$ are uniformly bounded on $[t-\frac{\delta}{4},t]$ for all $t\in [\frac{\delta}{2},T)$, by constants which depend only on $E_0(u_0)$, $E_2(u_0)$, $M$, $N$, $R$ and the order of derivatives of $u$.
\end{thm}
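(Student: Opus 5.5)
The plan is to bootstrap from the uniform $W^{4,2}(M)$ bounds of Lemma~\ref{lamm lemma 2} to H\"older bounds, and then to all higher derivatives via parabolic Schauder theory for the fourth-order operator $\partial_t+\delta_2\Delta^2-\delta_1\Delta$, following Lamm~\cite{lamm2004biharmonic}. Fix $t\in[\frac{\delta}{2},T)$. Then $[t-\frac{\delta}{2},t]\subset[\frac{\delta}{4},T)$, so \eqref{nabla3du L2(M)-estimate} and \eqref{D4u L2(M)-estimate} give $\sup_{t'}\|u(t')\|_{W^{4,2}(M)}\le c$ on this interval. Since $\dim M=2$, Sobolev embedding gives $W^{4,2}(M)\hookrightarrow C^{2,\alpha}(M)$ for every $\alpha<1$, and $W^{4,2}\hookrightarrow W^{3,p}$ for all $p<\infty$. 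Hence $D u$, $\nabla Du$ and $D^2u$ are bounded in space, uniformly in $t'$. For H\"older continuity in time, I will use the equation itself. Written extrinsically, \eqref{sesqui-harmonic map flow} reads
\[
\partial_t u+\delta_2(\Delta^E)^2u-\delta_1\Delta^E u=F(u,Du,D^2u,D^3u),
\]
where $F$ is smooth in $u$ and polynomial in the derivatives, and is linear in $D^3u$ with coefficients depending on $(u,Du)$. With the above bounds this gives $\sup_{t'}\|\partial_t u(t')\|_{L^2(M)}\le c$, since the $D^3u$ terms lie in $L^p$ for every $p$. Interpolating between the time-Lipschitz bound in $L^2$ and the spatial $C^{2,\alpha}$ bound yields a H\"older modulus in time for $u$, $Du$ and $D^2u$, so $u\in C^{2+\beta,(2+\beta)/4}$ on $M\times[t-\frac{\delta}{2},t]$ for some $\beta>0$.

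Next comes the bootstrap step. Multiply by a time cut-off $\zeta(t')$ that vanishes near $t-\frac{\delta}{2}$ and equals $1$ on $[t-\frac{\delta}{4},t]$, and regard the equation for $\zeta u$ as linear. Its principal part $\partial_t+\delta_2\Delta^2-\delta_1\Delta$ has smooth coefficients coming from $g$. The right side contains $D^3u$ multiplied by coefficients that are already H\"older. The main obstacle is exactly this $D^3u$ term, since at the starting regularity we only control $D^3u$ in $L^\infty_tL^p_x$, not in a H\"older space. I will handle it in two stages. First, I apply $L^p$ parabolic theory (Solonnikov estimates for fourth-order parabolic systems) with $F\in L^p$ to get $u\in W^{4,1}_p$ for large $p$. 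Parabolic embedding then gives $D^3u\in C^{\gamma,\gamma/4}$. Next, I move the $D^3u$ term into the operator as a lower-order term with H\"older coefficients and apply parabolic Schauder estimates to obtain $u\in C^{4+\gamma,1+\gamma/4}$. One point to watch is that the first step must not require a smallness condition. This holds because $F$ is subcritical once $Du$ is bounded.

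Finally, I iterate. Differentiating the equation, $D^ku$ satisfies the same linear equation with a right side involving derivatives of order at most $k+3$, which are already controlled. Schauder estimates therefore give $C^{k+4+\gamma,(k+4+\gamma)/4}$ bounds on a slightly smaller time interval, and finitely many shrinkings, chosen geometrically inside $[t-\frac{\delta}{2},t]$ so that they stay above $t-\frac{\delta}{4}$, handle each order. Time derivatives follow by using the equation to convert $\partial_t^j$ into spatial derivatives of order $4j$. All constants depend only on the bounds from Lemma~\ref{lamm lemma 2}, hence on $E_0(u_0)$, $E_2(u_0)$, $M$, $N$, $R$ and the order of the derivative, and they are uniform in $t$ because the interval length $\delta$ is fixed.
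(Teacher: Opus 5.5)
Your proposal follows essentially the paper's route: the paper gives no argument beyond deferring to Lamm (who follows Struwe), and your bootstrap is the standard way to carry out that reference, and you carry it out correctly. The bootstrap runs from the uniform $W^{4,2}$ bounds of Lemma~\ref{lamm lemma 2}, through Sobolev embedding in dimension two, then linear parabolic $L^p$ and Schauder estimates for $\partial_t+\delta_2(\Delta^E)^2-\delta_1\Delta^E$, then induction on the order. The only slip is in the time bookkeeping: for $t\in[\frac{\delta}{2},T)$ one only has $[t-\frac{\delta}{4},t]\subset[\frac{\delta}{4},T)$, not $[t-\frac{\delta}{2},t]\subset[\frac{\delta}{4},T)$ (that needs $t\ge\frac{3\delta}{4}$), so to leave room for your cut-off $\zeta$ you should first obtain the $W^{4,2}$ bounds for $t'\ge\frac{\delta}{8}$, say, which the same proof as Lemma~\ref{lamm lemma 2} provides with $\delta$-dependent constants.
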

\begin{proof}
The proof is the same as that of Lamm~\cite{lamm2004biharmonic}.

\end{proof}

\begin{rmk}\label{estimates at first singular time}
    All the estimates in Lemmas~\ref{L2-estimate for time derivative}, \ref{lamm lemma 2}, and Theorem~\ref{uniform holder bounds for all derivatives} can also be formulated locally by introducing a cutoff function $\eta$ as in \eqref{cut-off function} and deriving the corresponding local estimates. This is precisely the approach taken in Lamm~\cite{lamm2004biharmonic}. The global estimates stated above are then obtained by combining these local estimates with a covering lemma due to Struwe~\cite{struwe1985evolution} and a partition of unity argument.

    If the initial data $u_0$ are sufficiently regular, then the corresponding estimates remain valid on the entire interval $[0,T)$. We omit the proofs of these generalizations, since they rely on essentially the same arguments.
\end{rmk}

\section{Global existence for the sesqui-harmonic map flow}\label{section 3.5}

In the following, we introduce a suitable notion of weak solution of \eqref{sesqui-harmonic map flow}. First, We expand $\Delta^2u$ (at a single point).
$$
\begin{aligned}
\Delta^2u&=\nabla_{e_i}\nabla_{e_i}\Delta u\\
&=\nabla_{e_i}(D_{e_i}\Delta u-D^2\pi_N(u)(Du(e_i),\Delta u))\\
&=D_{e_i}(D_{e_i}\Delta u-D^2\pi_N(u)(Du(e_i),\Delta u))\\&-D^2\pi_N(u)(Du(e_i),D_{e_i}\Delta u
-D^2\pi_N(Du(e_i),\Delta u))\\
&=\Delta^E\Delta u-D^3\pi_N(u)(Du(e_i),Du(e_i),\Delta u)\\
&-D^2\pi_N(u)(\Delta^Eu,\Delta u)-2D^2\pi_N(u)(Du(e_i),D_{e_i}\Delta u)\\
&+D^2\pi_N(u)(Du(e_i),D^2\pi_N(Du(e_i),\Delta u))\\
&=\Delta^E\Delta u-2\operatorname{div}(D^2\pi_N(u)(Du,\Delta u))\\
&+D^2\pi_N(u)(\Delta^Eu,\Delta u)+D^3\pi_N(Du(e_i),Du(e_i),\Delta u)\\
&+D^2\pi_N(u)(Du(e_i),D^2\pi_N(Du(e_i),\Delta u)),
\end{aligned}
$$
Where we used the fact that,
$$
\begin{aligned}
\operatorname{div}(D^2\pi_N(u)(Du,\Delta u))&=D_{e_i}(D^2\pi_N(u)(Du(e_i),\Delta u))  \\
&=D^2\pi_N(u)(Du(e_i),D_{e_i}\Delta u)+D^2\pi_N(u)(\Delta^Eu,\Delta u)\\
&+D^3\pi_N(u)(Du(e_i),Du(e_i),\Delta u).
\end{aligned}
$$
By replacing the above for $\Delta^2u$ in \eqref{sesqui-harmonic map flow}, we obtain
$$
\begin{aligned}
    \partial_tu&=\delta_1\Delta u+\delta_2(-\Delta^E\Delta u-2\operatorname{div}(D^2\pi_N(u)(Du(e_i),\Delta u))+D^2\pi_N(u)(\Delta^Eu,\Delta u)\\
    &+D^3\pi_N(Du(e_i),Du(e_i),\Delta u)+D^2\pi_N(u)(Du(e_i),D^2\pi_N(Du(e_i),\Delta u))\\
    &+R^N(u)(Du(e_i),\Delta u)Du(e_i)).
\end{aligned}
$$
We rewrite the curvature term by the Gauss equation; which states that for $X,Y,Z$, the sections of the tangent bundle,we have
$$
S(X,A(Y,Z))-S(Y,A(X,Z))=R(X,Y)Z.
$$
So we can derive
$$
\begin{aligned}
    D^2\pi_N(Du(e_i),D^2\pi_N(\Delta u,Du(e_i))&-D^2\pi_N(\Delta u,D^2\pi_N(Du(e_i),Du(e_i))\\
    &=R^N(u)(Du(e_i),\Delta u)Du(e_i).
\end{aligned}
$$
And of course for the harmonic part we have
$$
\Delta u=\Delta^Eu-D^2\pi_N(u)(Du(e_i),Du(e_i)).
$$
Now that \eqref{sesqui-harmonic map flow} is rewritten as a map from $M$ into $\mathbb{R}^l$, It makes sense to define the test function $\Phi\in C^\infty_0(M\times[0,T];\mathbb{R}^l)$ and use the Euclidean inner product. Multiplying \eqref{sesqui-harmonic map flow} with $\Phi$ and integrating with respect to the spacial variable and integrating by part we obtain
\begin{equation}\label{spacial weak solution}
    \begin{aligned}
        \int_M\langle\partial_tu,\Phi\rangle dv_g&=\delta_1\int_M\langle\Delta^Eu,\Phi\rangle dv_g\\
        &+\delta_1\int_M\langle D^2\pi_N(u)(Du(e_i),Du(e_i)),\Phi\rangle dvg\\
        &-\delta_2\int_M\langle\Delta u,\Delta^E\Phi\rangle+\delta_2\int_M\langle-2\operatorname{div}(D^2\pi_N(u)(Du(e_i),\Delta u))\\
        &+D^2\pi_N(u)(\Delta^Eu,\Delta u)+D^3\pi_N(Du(e_i),Du(e_i),\Delta u)\\
        &+2D^2\pi_N(u)(Du(e_i),\Delta u)\\
        &-D^2\pi_N(\Delta u,D^2\pi_N(Du(e_i),Du(e_i))),\Phi\rangle dv_g.
    \end{aligned}
\end{equation}
 
\begin{defn}\label{definition of weak solution}
    $u$ is a weak solution of \eqref{sesqui-harmonic map flow} if $u\in W^{1,2}(M\times[0,T];N)$ s.t. for a.e. $t\in[0,T]$, $u(x,.)\in W^{2,2}(M;N)$ and solves \eqref{spacial weak solution} in the weak form.
\end{defn}

Now we prove theorem~\ref{Global solution}.
\begin{proof}
It was shown by Moser~\cite{moser2015L} and Zhu~\cite{zhu2014bubble} that if $u\in W^{1,2}(M;N)$ and $\Delta u\in L^2(M;\mathbb{R}^l)$, then $u\in W^{2,2}(M;N)$. 

Since $C^\infty(M;N)$ is dense in $W^{2,2}(M;N)$, we choose a sequence of smooth initial data $u_0^m$ converging to $u_0$ in $W^{2,2}(M;N)$. Let $u^m$ denote the corresponding local smooth solutions of \eqref{sesqui-harmonic map flow} with initial data $u_0^m$ for which all the estimates derived in the previous section are valid. Since $u^m_0\rightarrow u_0$ in $W^{2,2}(M;N)$ there exists $R>0$ such that $E_0(u^m_0;B_{2R}(x))\leq \frac{\epsilon_0}{2}$ for all $x\in M$ and all $m$.
By \eqref{monotonicity formula for harmonic part}, we have
$$
E_0(u^m(T),B_R^M(X))\leq \epsilon_0
$$
holds up to some time $T$ of order $\epsilon_0R^2$. By Theorem~\ref{uniform holder bounds for all derivatives}, there exists $\delta>0$ such that for every $t\in[\frac{\delta}{2},T)$, the interval $[t-\frac{\delta}{4},t]$ is contained in $(0,T)$, and the Hölder norms of $u^m$ and all its derivatives are bounded on this interval by a constant $c(\delta)$ depending  on the order of the derivative.

Let $K\subset(0,T)$ be a compact subset. Covering $K$ by finitely many overlapping intervals of this form and taking the maximum of the corresponding constants, we obtain uniform bounds on the Hölder norms of $u^m$ and all its derivatives on $K$.

Therefore, by the Arzelà--Ascoli theorem, a subsequence of $u^m$ converges to a smooth solution $u$ on every compact subset of $M\times(0,T)$. 

Moreover, since $\partial_t u^m$ is uniformly bounded in $L^2(M\times[0,T];N)$ and $u_0^m \to u_0$ in $W^{2,2}(M;N)$, the fundamental theorem of calculus yields
$$
u(t) \to u_0 \quad \text{in } L^2(M;N) \quad \text{as } t \to 0.
$$    
As a result $u$ is a solution of \eqref{sesqui-harmonic map flow} with initial data $u_0$. 

To characterize the maximal time of existence of a smooth solution, assume that for some $R$ we have
\[
\limsup_{t\to T} E_0(u(t);B_R^M(x)) \leq \epsilon_0.
\]
By Theorem~\ref{uniform holder bounds for all derivatives}, the Hölder norms of $u$ and all its derivatives remain bounded on the interval $[T-\frac{\delta}{4},T]$. In particular, $u(\cdot,T)$ is smooth. Hence, $u$ can be extended smoothly beyond $T$, which contradicts the maximality of $T$.

The finiteness of the singular set follows from the additivity of the energy, lemma~\ref{energy decreasing for sesqui-harmonic map}, and \eqref{monotonicity formula for harmonic part}. Let $\{x_j\}_{j=1}^K$ be any finite subset of singular points at time $T$. then we have
\[
\limsup_{t\to T} E_0(u(t);B_R^M(x_j)) > \epsilon_0.
\]
for any $R>0$ and $1\leq j\leq K$. If we choose $R>0$ small enough such that $B_{2R}^M(x_j),\; 1\leq j\leq K$ are mutually disjoint, then by \eqref{monotonicity formula for harmonic part}, we have
$$\begin{aligned}
    K\epsilon_0&\leq \sum\limits_{1\leq j\leq K}\limsup_{t\to T} E_0(u(t);B_R^M(x_j))\\
    &\leq \sum\limits_{1\leq j\leq K}((E_0(u(\tau),B_{2R}^M(x_j))+\frac{\epsilon_0}{2})\\
    &\leq E_0(u(\tau))+\frac{K\epsilon_0}{2}
\end{aligned}
$$
for any $\tau\in [T-\frac{\epsilon_0R^2}{2c(1+R^2)},T]$, where $c$ is the constant in \eqref{monotonicity formula for harmonic part}. Therefore $K\leq \frac{2E(u_0)}{\epsilon_0}$ and the set of singular points at time $T$ is finite.
\end{proof}
Now we prove theorem~\ref{main theorem 2}.

\begin{proof}

As in the proof of Theorem~\ref{Global solution}, the fundamental theorem of calculus implies that there exists $u(T)$ such that
\[
u(t) \to u(T) \qquad \text{in } L^2(M).
\]
Since $\|Du(t)\|_{L^2(M)}$ is uniformly bounded, we can extract a subsequence of the above sequence that converges weakly to $u(T)$ in $W^{1,2}(M;N)$. Together with the fact that $\Delta u(T) \in L^2(M;\mathbb{R}^l)$, this implies that $u(T) \in W^{2,2}(M;N)$. For further details, we refer to Moser~\cite{moser2015L} and Zhu~\cite{zhu2014bubble}.
Moreover, due to remark~\ref{estimates at first singular time}, $u(T)$ is actually smooth apart from singular points on every compact subset of $M\times \{T\}\setminus\{x_k\}_{k=1}^K\times\{T\}$.

Now assume $v$ be the solution on $(T,T_2)$ with initial data $u(T)$. Define $w$ on $(0,T_2)$ by
\[
w(t)=
\begin{cases}
v(t), & T<t<T_2,\\
v(T)=u(T), & t=T,\\
u(t), & 0<t<T,\\
u_0=u(0), & t=0.
\end{cases}
\]
Then $w$ satisfies \eqref{spacial weak solution} for a.e. $t\in(0,T_2)$ and is a weak solution of \eqref{sesqui-harmonic map flow}.

Since the initial energy is finite and nonincreasing by Lemma~\ref{energy decreasing for sesqui-harmonic map}, and since each singularity carries at least $\epsilon_0$ of energy, the total number of singularities is finite. For a detailed proof, we refer to Struwe~\cite{struwe1985evolution}.

As a result, iterating the above construction beyond $T_2$ and using that the solution becomes smooth after the final singular time $T_n$, we obtain a global weak solution defined for all $t\in(0,\infty)$.
\end{proof}

\section{Blow up  analysis}\label{section blow up analysis}
In the following, we analyze the behavior of solutions to \eqref{sesqui-harmonic map flow} in a neighborhood of a singular point. Let $(x_0,T) \in M \times \mathbb{R}$ be a singular point, and let $u$ be a solution of \eqref{sesqui-harmonic map flow}.
We recall that there exists $\epsilon_0 > 0$ such that if
\[
\sup_{(x,t)\in M\times[0,T)} E_0(u(t); B_{2R}^M(x)) \leq \epsilon_0
\]
for some $R > 0$, then all previously derived estimates, including \eqref{D4u L2(M)-estimate}, remain valid. Consequently, singularities can only occur through concentration of the harmonic energy.

This observation motivates rescaling the solution $u$ around the singular point in order to capture the behavior of the flow near points of energy concentration. 

Now we prove theorem~\ref{scaling analysis theorem}.

\begin{proof}

As $(x_0,T)$ is a singular point, by theorem~\ref{uniform holder bounds for all derivatives},  for any $t_m<T$ there exist a time $T_1<t_m$ such that the solution $u$ is smooth on $[T_1,t_m]$. Following Struwe~\cite{struwe1985evolution} and Lamm~\cite{lamm2004biharmonic}, there exist sequences $x_m \to x_0$, $t_m \to T$ with $t_m < T$, and $R_m \to 0$ with $R_m \in (0,R_0]$, such that

\begin{equation}
\begin{aligned}
    &\int_{B_{R_m}^M(x_m)}|Du(t_m)|^2dv_g=\sup\limits_{(x,t)\in B_{2R_0}(x_0)\times[t_m-\tau_m,t_m]}\int_{B_{R_m}^M(x)}|Du(t)|^2dv_g=\frac{\epsilon_0}{4},
    \end{aligned}
\end{equation}
Where $\tau_m=\frac{\epsilon_0R^2_m}{8c}$ and $c$ is the constant in \eqref{monotonicity formula for harmonic part}. 
We define the scaled function $v_m$ by
$$
v_m(x',t') := u(x_m + R_m x',\, t_m + R_m^2 t').
$$
Replacing $u$ by the scaled function $v_m$ in the above sequence, we obtain

\begin{equation}
\begin{aligned}
    \int_{B_1(0)}|Dv_m(t')|^2dv_{g_m}=\sup\limits_{(x',t')\in B_{\frac{2R}{R_m}}(0)\times[-\frac{\epsilon_0}{8c},0]}\int_{B_1(x')}|Dv_m(t')|^2dv_{g_m}=\frac{\epsilon_0}{4}.
    \end{aligned}
\end{equation}
By the fact that $t_m-\tau_m\rightarrow T$ and lemma~\ref{energy decreasing for sesqui-harmonic map}, we can see that as $m\rightarrow\infty$,
\begin{equation}\label{scaled L2 for partialt}
    \int_{-\frac{\epsilon_0}{8c}}^0\int_{B_{\frac{R}{R_m}}(0)}|\partial_tv_m|^2\leq\int_{t_m-\tau_m}^{t_m}\int_M|\partial_tu|^2\rightarrow0,
\end{equation}
where $g_m$ is the Riemannian metric in the rescaled variables.

Also for large $m$, by \eqref{2.34},  we obtain
\begin{equation}\label{scaled version L2 of 4th derivative}
    \int_{-\frac{\epsilon_0}{8c}}^0\int_{B_1(x')}|D^4v_m|^2\leq R_m^4\int_{t_m-\tau_m}^{t_m}\int_M|D^4u|^2\leq R^4_m\times\frac{c\tau_m}{R_m^6}\leq c,
\end{equation}
For all $x'\in B_{\frac{R}{R_m}}(0)$ and the constant $c$ in \eqref{2.34}.

As in Lamm~\cite{lamm2004biharmonic}, we can choose a sequence $s_m \in \left[-\frac{\epsilon_0}{8c},0\right]$ such that, for the sequence
$$
\bar{v}_m(x'):=v_m(x',s_m)=u(x_m+R_mx',t_m+R_m^2s_m)
$$
we have
\begin{equation}\label{scaled time deriavative sequence}
\partial_tv_m(.,s_m)\rightarrow0\;\;\; \text{in}\;L^2(\mathbb{R}^2;N)
\end{equation}
and 
\begin{equation}\label{scaled spacial derivative sequence}
    \bar{v}_m\rightarrow \bar{v}\;\;\;\text{weakly in}\; W_{loc}^{4,2}(\mathbb{R}^2;N)\;\text{strongly in }\;W_{loc}^{2,2}(\mathbb{R}^2;N).
\end{equation}

Denote the biharmonic part of \eqref{sesqui-harmonic map flow} by $f$. Using \eqref{sesqui-harmonic map flow}, we may write
\[
f := \Delta^2 u + R(u)(\Delta u,Du)Du
= \frac{\delta_1}{\delta_2}\Delta u - \frac{1}{\delta_2}\partial_t u.
\]
Hence, by \eqref{sesqui-harmonic map flow}  Lemma~\ref{energy decreasing for sesqui-harmonic map}, we obtain
\[
\begin{aligned}
\lim_{m\to \infty}\int_{t_m-\tau_m}^{t_m}\int_M |f|^2
&\leq \lim_{m\to \infty} c\left(
\int_{t_m-\tau_m}^{t_m}\int_M |\Delta u|^2
+
\int_{t_m-\tau_m}^{t_m}\int_M |\partial_t u|^2
\right)
= 0.
\end{aligned}
\]

We now rescale the biharmonic part by setting
\[
g_m(x',t') := R_m^2 f(x_m + R_m x',\, t_m + R_m^2 t').
\]
Then we obtain
$$
\int_{B_{\frac{R_0}{R_m}}}|g_m|^2=R_m^2\int_{B_{\frac{R_0}{R_m}}}(\frac{\delta_1}{\delta_2})^2|\Delta v_m|^2+R_m^2\int_{B_{\frac{R_0}{R_m}}}(\frac{1}{\delta_2})^2|\partial_{t'}v_m|^2-R_m^2\int_{B_{\frac{R_0}{R_m}}}(\frac{2\delta_1}{\delta_2^2})\langle\Delta v_m,\partial_{t'}v_m\rangle
$$
Using the same sequence constructed above together with \eqref{scaled time deriavative sequence} and \eqref{scaled spacial derivative sequence}, we conclude that
$
g_m \to 0
$
in $L^2_{\mathrm{loc}}(\mathbb{R}^2)$
as $m\to\infty$.

After rescaling \eqref{sesqui-harmonic map flow}, we obtain
\[
\partial_{t'} v_m = \delta_1 \Delta v_m - \delta_2 g_m.
\]
Since $g_m \to 0$, passing to the limit yields
\[
\Delta \bar{v}=0,
\]

in the weak sense, with positive energy. In fact Replacing $u$ by the rescaled function $v_m$ and $R$ by $R_m$ in \eqref{monotonicity formula for harmonic part}, we obtain, for $s = s_m + R_m^2 s' < t = t_m + R_m^2 t'$,

\begin{equation}\label{monotonicity formula for the scaled function}
    \begin{aligned}
        \int_{B_1(0)}|Dv_m(t')|^2dv_{g_m}&\leq \int_{B_{2}(0)}|Dv_m(s')|^2dv_{g_m}+{c(1+R^2_m)(t'-s')}.
    \end{aligned}    
\end{equation}

Now if we choose $t'-s'<\frac{\epsilon_0}{8c}$, we obtain
$$
E(\bar{v};B_2(0))=\lim\limits_{m\rightarrow\infty}\int_{B_2(0)}|Dv_m(s')|^2dv_{g_m}\geq \frac{\epsilon_0}{8}.
$$
Smoothness of stationary harmonic maps in two dimensions was established by Hélein~\cite{helein1991regularity}.
\end{proof}

\section{Acknowledgment}
This work was supported by the Engineering and Physical Sciences Research Council [grant number EP/W524712/1].

\printbibliography

\end{document}